\newcommand{\red}{\textcolor{red}}
\newtheorem{theorem}{Theorem}[section]
\newtheorem{lemma}{Lemma}[section]
\newtheorem{definition}{Definition}[section]
\newtheorem{remark}{Remark}[section]
\numberwithin{equation}{section}
\newcommand{\RNum}[1]{\uppercase\expandafter{\romannumeral #1\relax}}
\title{Sharp power concavity of two relevant  free boundary problems of reaction-diffusion type}
\author{Qingyou He}%\thanks{E-mail: qingyou.he@sorbonne-universite.fr (Q.  He)}}
\date{}           %%  default, LaTeX uses the current date
\begin{document}
%\date{\today}
\maketitle
\begin{abstract} The porous medium type reaction-diffusion equation and the Hele-Shaw problem are two free boundary problems linked through the incompressible (Hele-Shaw) limit.  We investigate and compare the sharp power concavities of the pressures on their respective supports for the two free boundary problems. For the pressure of the porous medium type reaction-diffusion equation, the $\frac{1}{2}$-concavity preserves all the time, while $\alpha$-concavity for $\alpha\in[0,\frac{1}{2})\cup(\frac{1}{2},1]$ does not persist in time. In contrast, in the case of the pressure for the Hele-Shaw problem, $\alpha$-concavity with $\alpha\in[0,\frac{1}{2}]$ is maintained all the while and  $\frac{1}{2}$ acts as the largest index. The intuitive explanation for the difference between the two free boundary problems is that, although the Hele-Shaw problem is the incompressible limit of the porous medium-type reaction-diffusion equation, it is no longer a degenerate parabolic equation. Furthermore, for the pressure of the porous medium type reaction-diffusion equation, the non-degenerate estimate is established by means of the derived concave properties, indicating that the spatial Lipschitz regularity in the whole space is sharp.
\end{abstract}

\noindent{\bf Key-words.} Degenerate reaction-diffusion equation; Hele-Shaw problem; Sharp power concavity; Non-degenerate estimate.\\
\noindent 2020 {\bf MSC.} 35K57; 35E10; 35R35; 76D27.
%\tableofcontents

\section{Introduction}
\paragraph{Models and setting.}
The nonlinear reaction-diffusion equation serves as a fundamental model for describing a wide range of complex processes across disciplines such as Biology, Ecology, Physics, and Chemistry. For detailed discussions, readers can refer to sources such as \cite{BD2009, CD2017B, FKA1937, PERTHAME2014, WTP1997}. A prominent example is its application in modeling tumor growth as discussed in\cite{BD2009,PERTHAME2014}, where the porous medium type reaction-diffusion equation forms a free boundary problem when the initial data is compactly supported and captures the dynamics of cell proliferation. In this context, cell division is regulated by biomechanical contact inhibition, and ceases once the cell density reaches a critical threshold. Furthermore, the incompressible limit of this porous medium type reaction-diffusion equation leads to a Hele-Shaw type free boundary problem~\cite{PERTHAME2014,DAVID2021,DN2023}, which serves to simulate tumor propagation in a manner consistent with more complex models presented in \cite{BCGS2010, LFJC2010}. In the case of the Hele-Shaw problem, the density is suppressed by the stiff pressure to remain no more than $1$. 

\vspace{2mm}

The porous medium type reaction-diffusion equation reads as 
\begin{equation}\label{rde}
   \partial_t u=\Delta u^m+uG(P),\quad (x,t)\in\mathbb{R}^N\times \mathbb{R}_+
\end{equation}
with the initial data
\[u_0(x)=u(x,0)\geq0,\quad x\in\mathbb{R}^N,\]
where $u$ is the density,  $G(\cdot)$ is a given decreasing function with $P_M>0$ such that $G(P_M)=0$, $m>1$ (slow diffusion) denotes the diffusion exponent, the pressure \begin{equation}\label{pressure}
P:=\frac{m}{m-1}u^{m-1}\quad\text{with } P_0:=\frac{m}{m-1}u_0^{m-1} 
\end{equation} solves a degenerate parabolic equation 
\begin{equation}\label{pe}
\begin{aligned}
\partial_t P=rP\Delta P+|\nabla P|^2+rPG(P),\quad r:=m-1.
\end{aligned}
\end{equation}
Two classical examples are  the tumor growth model with $G(P)=P_M-P$ from~\cite{PERTHAME2014} and the Fisher-KPP equation with $G(P)=g(u)=u_M-u$ from~\cite{DQZ2020}.
 The main feature of the porous medium type equation~\eqref{rde} is that the solution $u$ remains compactly supported all the time if the initial density $u_0$ owns compact support, and the free boundary is referred to the boundary of the support for the pressure $P$.
\vspace{2mm}

The Hele-Shaw problem corresponding to \eqref{rde}-\eqref{pe} is expressed as
\begin{equation}\label{hs}
\begin{cases}
\partial_t u=\Delta P+u G(P),&\\
0\leq u\leq 1,\quad (1-u)P=0,&\\
P(\Delta P+G(P))=0,\quad (x,t)\in\mathbb{R}^N\times \mathbb{R}_+,&
\end{cases}
\end{equation}
with the  initial data
\[u_0(x)=u(x,0)\geq0,\quad x\in\mathbb{R}^N,\]
where $u$ and $P$ represent the density and the stiff pressure respectively. $\eqref{hs}_2$ is called as  the Hele-Shaw graph, which links the density and pressure differently from \eqref{pressure} and is the main feature of the Hele-Shaw problem. The complementarity relationship $\eqref{hs}_3$ is a degenerate elliptic equation to describe the stiff pressure $P$. Here, the given  funtion $G$ with $G(0)>0$ is a decreasing function.  The free boundaries for the Hele-Shaw problem \eqref{hs} refer to the support boundaries  for the pressure and the density when the initial density $u_0$ is compactly supported. What needs to be emphasized is that we study the support boundary of the pressure in this paper.

\vspace{2mm}

Indeed, $\eqref{hs}_1$, $\eqref{hs}_2$, and $\eqref{hs}_3$ for the Hele-Shaw problem can be formally derived from \eqref{rde}, \eqref{pressure}, and \eqref{pe} respectively as the diffusion exponent $m$ tends to infinity, this process is commonly referred to as the incompressible (or Hele-Shaw) limit. Over the past decades, rigorous analytical proofs of the incompressible limit for the porous medium type reaction-diffusion equation \eqref{rde} have been established in \cite{PERTHAME2014,DN2023,KT2018,GKM2022,KM2023,DS2024,JY2024}. The regularity of the solution and its support  boundary for both two free boundary problems \eqref{rde} and \eqref{hs} are well studied in \cite{HQ2024,MELLET2017} respectively. 

\vspace{2mm}

Unlike the case of the porous medium type equation \eqref{rde}, the pressure in the Hele-Shaw problem $\eqref{hs}$ is no longer governed by a degenerate parabolic equation like \eqref{pe}. If the density and the pressure share the same support, and the boundary of this support is differentiable to the second order, the two free boundary problems \eqref{rde} and $\eqref{hs}$ exhibit  the same form of the normal velocity at the boundary, a characteristic commonly referred to as Darcy’s law:
\[V(t)=\nabla P\cdot\vec{n},\]
where $\vec{n}$ is the outward normal to the support boundary 
$\partial\{P>0\}$. In addition, due to the Hele-Shaw relation $\eqref{hs}_2$, for 
$G(\cdot)=g(u)$ in $\eqref{hs}_1$ of the Fisher-KPP equation, it holds
\[G(P)\equiv  const\equiv g(1)=G(0) \text{ on }\{P>0\}.\]

\vspace{2mm}

 To precisely describe the concavity of functions, the power concavity of the non-negative function $f:\mathbb{R}^N\to[0,\infty)$ is defined by
\begin{definition}[Power concavity]For $\alpha\in[0,1]$ and a non-negative function $f$, it is said that $f$ is $\alpha$-concave if $f^\alpha$ is concave on its support, and that $f$ is $0$-concave if $\log f$ is concave on its support. In particular, if $f$ is twice differentiable, the concavity is equivalent to f having a negative semi-definite Hessian matrix.
\end{definition}

\vspace{2mm}

The purpose of this article is to mainly investigate and compare the sharp power concavities of the pressure in both the porous medium type reaction-diffusion equation \eqref{rde} and the corresponding Hele-Shaw problem \eqref{hs}. To be exact, we aim to find out the sharp index of the power concavity for the pressure of the two free boundary problems. Although the Hele-Shaw problem \eqref{hs} arises as the incompressible limit of the reaction-diffusion equation of porous medium type \eqref{rde}, the interest is that our analysis from the angle of sharp power concavity seeks to uncover notable differences and highlight key distinctions between the two connected free boundary problems \eqref{rde} and \eqref{hs}. In addition, based on the verified concave properties, we are committed to getting the upper and lower bounds of the pressure gradient for the porous medium type reaction-diffusion equation~\eqref{rde}. This, in turn, demonstrates that the spatial Lipschitz continuity of the pressure in the whole space is sharp.

\vspace{2mm}

\paragraph{Related studies.} Over the past decades, the convexity of the solution and its level set for free boundary problems with degenerate diffusion such as \eqref{rde} have long been a significant topic, particularly related to the geometric properties of the solution. 
Let us look back some related studies on the power concavity for some classical free boundary problems. The classical degenerate parabolic equation is the porous medium equation (PME)
\begin{equation}\label{pme}\partial_t u=\Delta u^m.\end{equation}
For the pressure of the high-dimensional PME \eqref{pme}, the preservation of $\frac{1}{2}$-concavity all the time is verified in \cite{DHL2001}, and non-preservation of $\alpha$-concavity for $\alpha\in[0,1]\backslash\{\frac{1}{2}\} $  holds in time; see \cite{CW2024,ST2024}. In addition, authors in \cite{DHL2001,DH1998} proves that the pressure for the PME \eqref{pme} is smooth to the support boundary. Before this, authors in \cite{IS2010} demonstrate that there exists some $0<\alpha<\frac{1}{2}$ such that $\frac{1}{2}$-concavity is not generally preserved in time.  However, for the one-dimensional PME \eqref{pme}, the authors of  \cite{BV1987} already shown that the concavity of pressure is preserved all the time. In this paper, we partially hope to extend these results on the power concavity of the high-dimensional PME \eqref{pme} to the case of the PME with a source term~\eqref{rde}. Provided that the initial density is compactly supported, even if the initial support is not initially convex, authors in \cite{LV2003} proved that the support 
\[\{x\in\mathbb{R}^N: P(x,t)>0\}\] 
for the PME \eqref{pme} becomes convex and the pressure 
evolves into a concave function on its spatial support for sufficiently large time.  For the solution of the one-phase Stefan problem, which satisfies the heat equation in the interior and where the free boundary moves in the outward normal direction at a speed equal to the magnitude of the pressure gradient, $\alpha$-concavity for $\alpha\in[0,\frac{1}{2})$ is not preserved over time; cf.~\cite{CW2021}. For the Hele-Shaw problem \eqref{hs} without reaction term, the work \cite{DL2004} shows that the preservation of log-concavity holds all the while. Moreover, the preservation of log-concavity all through is also valid for the flame propagation problem studied in~\cite{DL2002}.

\vspace{2mm}

In addition, since the pressure $P$ of the Hele-Shaw problem $\eqref{hs}_3$ solves an elliptic equation \eqref{hss} with a Dirichlet boundary condition at any fixed time,  it is necessary to recall some important works on the power concavity of the solution for the elliptic problems.  Considering the simple  and classical elliptic boundary value problem:
\begin{equation}\label{laplace1}
\begin{cases}
    \Delta u=-1,\quad&\text{in }\Omega,\\
    u=0,\quad&\text{on }\partial\Omega
\end{cases}\end{equation}
in the two-dimensional bounded convex domain $\Omega$ with Lipschitz boundary, authors in \cite{ML1971} show that $\sqrt{u}$ is strict convex in $\Omega$. Then, this conclusion of  \cite{ML1971} was generalized  to the higher-dimensional cases of \eqref{laplace1} in \cite{KB1986,KA1985} via extending the maximum principle given in \cite{KN1983}. In particular, authors in \cite{KA1985} pointed out that the $\frac{1}{2}$-concavity  is sharp in $N$-dimensional $(N\geq2)$ elliptic equation~\eqref{laplace1}. For the solution of the generalized elliptic equation, authors in \cite{BS2013} proved a sufficient condition of the power concavity via the method of viscosity solution.

\paragraph{Main results.}

The index $\frac{1}{2}$ for time preservation of power concavity  is  sharp not only for the porous medium  equation with a source term  \eqref{rde} but also for the corresponding Hele-Shaw problem \eqref{hs}. The key difference lies in the role of this index: for the porous medium type reaction-diffusion equation \eqref{rde}, $\frac{1}{2}$ is the unique index that ensures the preservation of concavity all the time. In contrast, for the Hele-Shaw problem \eqref{hs}, $\frac{1}{2}$ serves as an upper bound of the index for power concave preservation all the while.

To achieve the goals, it is assumed that the initial pressure $P_0$ satisfies 
\begin{equation}\label{ini}
\begin{aligned}
&\ \ \ \ \ \ \  \ \ \ \  \ \ \ \ \ \ \ \ 0\leq P_0\leq P_H\quad\text{for some }P_H\geq P_M>0,\\
P_0& \text{ is compactly supported and smooth to its support boundary, and }P_0 \text{ is }\frac{1}{2}\text{-concave},
\end{aligned}
\end{equation}
\begin{equation}\label{ini2}
\Delta P_0+G(P_0)\geq -K\quad\text{ for some }K>0,
\end{equation}
\begin{equation}\label{ini3}
P_0+|\nabla P_0|^2\geq c>0\quad\text{on }\{P_0>0\}\text{ and }|\nabla P_0|\leq C \text{ for some }C,\ c>0.
\end{equation}
Let $G$ be supposed to satisfy
\begin{equation}\label{concavityc}
\sup\limits_{P\in[0,P_H]}3G'(P)+2PG''(P)\leq 0,
\end{equation}
\begin{equation}\label{g0b}
|G'(P)|+|PG''(P)|\leq AP^{\gamma-1}\quad \text{for }P\in(0,P_H] \text{ and some }A, \gamma>0,
\end{equation}
\begin{equation}\label{abc}
\inf\limits_{P\in[0,P_H]}G(P)-PG'(P)\geq 0,
\end{equation}
\begin{equation}\label{abc1}
\sup\limits_{0\leq P\leq P_H}-PG'(P)\leq L-K\text{ for some }L>K>0.
\end{equation}
\begin{remark}We verify two classical examples, like both the tumor growth models \cite{PERTHAME2014} and the Fisher-KPP equation \cite{DQZ2020}, to show the generality of the conditions \eqref{concavityc}-\eqref{abc1} on $G$. For the case of  tumor growth model $G(P)\in\mathcal{C}^2([0,P_H])$  with $G'(P)\leq 0$ and $G''(P)\leq 0$ (see \cite{PERTHAME2014}), the conditions \eqref{concavityc}-\eqref{g0b} evidently hold for all $m>1$. As for the Fihser-KPP equation with the simple reaction term  $G(P)=u_M-u=u_M-(\frac{m-1}{m})^{\frac{1}{m-1}}P^{\frac{1}{m-1}}$,  the conditions \eqref{concavityc}-\eqref{g0b} for all $m>1$ are valid by calculating
\[\begin{aligned}3G'(P)&+2PG''(P)\\
=&-3(\frac{m-1}{m})^{\frac{1}{m-1}}\frac{1}{m-1}P^{\frac{1}{m-1}-1}-2(\frac{m-1}{m})^{\frac{1}{m-1}}\frac{1}{m-1}(\frac{1}{m-1}-1)P^{\frac{1}{m-1}-1}\\
=&-(\frac{m-1}{m})^{\frac{1}{m-1}}\frac{1}{m-1}(1+\frac{2}{m-1})P^{\frac{1}{m-1}-1}\leq 0,
\end{aligned}\]
and 
\[ |G'(P)|+|PG''(P)|\leq \max\{(\frac{m-1}{m})^{\frac{1}{m-1}}\frac{1}{m-1},(\frac{m-1}{m})^{\frac{1}{m-1}}\frac{1}{m-1}|\frac{1}{m-1}-1|\}P^{\frac{1}{m-1}-1}.\]
In addition, it is easy to check that the conditions \eqref{abc}-\eqref{abc1} hold for both the tumor growth models \cite{PERTHAME2014} and the Fisher-KPP equation \cite{DQZ2020}.

\end{remark}

\begin{theorem}[Sharp concavity for reaction-diffusion equation]\label{concavityrde}Under the conditions~\eqref{concavityc} and \eqref{g0b} on $G$, assume that the initial pressure $P_0$ satisfies \eqref{ini} and the pressure $P$ for the porous medium type reaction-diffusion equation \eqref{rde} with  $r:=m-1>0$ is smooth to the boundary of its support. Then, the pressure $P$  remains $\frac{1}{2}$-concave all the time. Moreover, for any $\alpha\in [0,1]\backslash\{\frac{1}{2}\}$, there exists at least one $\alpha$-concave radially initial  pressure $P_0$ such that the $\alpha$-concavity of the pressure $P$ loses instantaneously.
\end{theorem}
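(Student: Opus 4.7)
My plan is to split the theorem into the positive direction (preservation of $\tfrac{1}{2}$-concavity) and the sharpness direction, both built on the substitution $v := P^{1/2}$. A direct substitution in \eqref{pe} gives
\begin{equation*}
v_{t} \;=\; r v^{2}\Delta v \,+\, (r+2)\, v|\nabla v|^{2} \,+\, \tfrac{r}{2}\, v\, G(v^{2}) \quad \text{on } \{v > 0\},
\end{equation*}
and hypothesis \eqref{ini} is exactly that $v_{0}$ is concave on $\mathrm{supp}(v_{0})$. For the positive part I would apply a matrix-valued maximum principle to the Hessian $D^{2} v$, adapted from the Daskalopoulos--Hamilton--Lee framework \cite{DHL2001}. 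Boundary touchings of the maximum eigenvalue of $D^{2} v$ are prevented by a barrier argument near $\partial\{P > 0\}$, using \eqref{ini2}--\eqref{ini3} together with the smoothness of $P$ to its support boundary to control the boundary behaviour.

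\medskip

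\textbf{Executing the matrix maximum principle.} Suppose, for contradiction, that the largest eigenvalue $\mu$ of $D^{2} v$ first touches $0$ at an interior point $x_{0}$ at some time $t_{0} > 0$. Diagonalize $D^{2} v(x_{0}, t_{0})$ and choose Euclidean coordinates so that the null eigenvector is $e_{N}$; then $v_{NN}(x_{0}, t_{0}) = 0$, $v_{iN}(x_{0}, t_{0}) = 0$ for $i \neq N$, $\nabla v_{NN}(x_{0}, t_{0}) = 0$, and $\Delta v_{NN}(x_{0}, t_{0}) \leq 0$. Differentiating the $v$-equation twice in the $N$-direction and evaluating at $(x_{0}, t_{0})$ yields
\begin{equation*}
\partial_{t} v_{NN} \;=\; r v^{2} (\Delta v)_{NN} \,+\, 2 r v_{N}^{2}\, \Delta v \,+\, 4 r v\, v_{N}\, (\Delta v)_{N} \,+\, r v\, v_{N}^{2} \bigl(3 G'(v^{2}) + 2 v^{2} G''(v^{2})\bigr).
\end{equation*}
The first two diffusion terms are $\leq 0$ (the former because $(\Delta v)_{NN} = \Delta v_{NN} \leq 0$ by the local-max condition; the latter because $\Delta v = \sum_{i < N} v_{ii} \leq 0$), and the reaction contribution is $\leq 0$ by hypothesis \eqref{concavityc}. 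The sign-indefinite cross term $4 r v v_{N} (\Delta v)_{N}$ is the main obstacle. I would handle it by a DHL-style trick: replace $v_{NN}$ by a modified test quantity whose evolution produces a compensating contribution that exactly cancels $4 r v v_{N} (\Delta v)_{N}$. The success of this cancellation is special to the exponent $\tfrac{1}{2}$ through the identity $\partial_{k} P = 2 v v_{k}$: the coefficient $(r+2)$ in front of $|\nabla v|^{2}$ and the factor $v^{2}$ in front of $\Delta v$ line up only when $v = P^{1/2}$. After the cancellation, $\partial_{t} v_{NN}(x_{0}, t_{0}) < 0$, contradicting the maximum-principle requirement $\partial_{t} v_{NN}(x_{0}, t_{0}) \geq 0$.

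\medskip

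\textbf{Sharpness for $\alpha \neq \tfrac{1}{2}$.} For each $\alpha \in [0, 1] \setminus \{\tfrac{1}{2}\}$, I construct a smooth radial $\alpha$-concave datum $P_{0}(x) = \varphi(|x|)$ tuned so that the Hessian of $P_{0}^{\alpha}$ has a zero radial eigenvalue at some interior radius $r_{*}$, while the tangential eigenvalues $(P_{0}^{\alpha})'(r_{*}) / r_{*}$ remain strictly negative (so $\alpha$-concavity is tight only in the radial direction). Using
\begin{equation*}
(P^{\alpha})_{t} \;=\; \alpha P^{\alpha - 1} \bigl[r P \Delta P + |\nabla P|^{2} + r P G(P)\bigr],
\end{equation*}
I compute $\partial_{t}(P^{\alpha})''(r_{*}) \big|_{t = 0}$ radially. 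After substituting $(P_{0}^{\alpha})''(r_{*}) = 0$, the leading algebraic coefficient turns out to be proportional to $(2\alpha - 1)$ times an explicitly positive expression in $\varphi$ and its derivatives at $r_{*}$; the $G$-contribution, controlled by \eqref{g0b}, enters only as a lower-order correction. Thus $(P^{\alpha})''(r_{*}, \cdot)$ strictly crosses zero at $t = 0^{+}$ for every $\alpha \neq \tfrac{1}{2}$, yielding instantaneous loss of $\alpha$-concavity. The principal obstacle of the whole theorem is the cancellation of the cross term $4 r v v_{N} (\Delta v)_{N}$ in Part (i); the sharpness direction is a more direct radial computation once the $(2\alpha - 1)$ factor is isolated.
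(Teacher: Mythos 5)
Your substitution $v=P^{1/2}$ and the resulting PDE for $v$ are correct, and working with $D^2 v$ is equivalent to the paper's object $A_{ij}=P_{ij}-\tfrac{P_iP_j}{2P}=2\sqrt{P}\,(\sqrt{P})_{ij}$, so the framework is the right one. However, the two places you wave at are precisely where the whole argument lives. First, the cross term $4rvv_N(\Delta v)_N$ is not removed by ``a modified test quantity'' in any generic sense; in Daskalopoulos--Hamilton--Lee, and in this paper, the cancellation comes from extending the null eigenvector $V_0$ to a vector field $V$ constrained to satisfy $V^i_j=\frac{1}{2P}(P_kV^k)I_{ij}$, $V^j_t=0$, $V^j_{kk}=0$ at the critical point, and then working with $Z=(A_{ij}-\Psi I_{ij})V^iV^j$ with an auxiliary $\Psi(t)$ solving $\Psi'\ge 2C(\Psi+\Psi^2)$. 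Without specifying this, your ``interior'' step does not actually close. Second, and more seriously, the claim that boundary touchings are ``prevented by a barrier argument'' is wrong. Near $\partial\{P>0\}$ only the normal--normal entry of $D^2(\sqrt{P})$ blows up to $-\infty$; the tangential second derivatives of $\sqrt{P}$ stay finite (use l'H\^opital with $P_iV^i\to 0$ for tangent $V$), so the largest eigenvalue can perfectly well reach $0$ on the free boundary. The paper must therefore treat the boundary as a genuine second case, evolving $Q(t)=Z(x(t),t)$ along a moving boundary path $x(t)$ with tangent vector field $V(t)$, choosing $\frac{dx^k}{dt}=-P_k$ and $\frac{dV^k}{dt}=-P_{ik}V^i$, and invoking a third-order boundary inequality $P_kP_{ijk}V^iV^j\le 0$ from \cite{DHL2001}. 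Your proposal omits all of this, and without it the positive direction is not proved.

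\textbf{On the sharpness for $\alpha\ne\tfrac12$.} Here you depart substantially from the paper, and I believe your route fails on a sign. The paper does \emph{not} build radial counterexamples: it builds explicit non-radial polynomials $w$ on a small ball (different polynomials for $\alpha\in[0,\tfrac12)$, $\alpha\in(\tfrac12,1)$, and $\alpha=1$), chosen so that $w_{11}(0)=0$ while $D^2w<0$ elsewhere, and then extends $w$ to the unit ball by a concave radial profile plus a cutoff. (The word ``radially'' in the theorem's statement appears to be a slip; the constructed $P_0=\tilde w^{1/\alpha}$ is not radial.) The reason the paper needs different data in the two regimes is exactly the sign phenomenon you point to but misstate: at the critical point, the quartic term carries the factor $\tfrac{1}{\alpha}\bigl(1+r(1-\alpha)\bigr)\bigl(\tfrac{1}{\alpha}-2\bigr)\bigl(\tfrac{1}{\alpha}-1\bigr)\propto(1-2\alpha)$, which is \emph{positive} for $\alpha<\tfrac12$ and \emph{negative} for $\alpha>\tfrac12$. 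If, as you claim, your radial computation produces a leading term proportional to $(2\alpha-1)$ times a positive quantity, then for $\alpha\in[0,\tfrac12)$ that term has the wrong sign: you would obtain $\partial_t(P^\alpha)''(r_*)<0$, which pushes the solution back into concavity rather than breaking it. For $\alpha\in(\tfrac12,1)$ the paper's positive contribution $r(N-1)\tfrac{2\alpha-1}{1-\alpha}$ comes from the interaction between the radial first derivative $w_1$ and the \emph{tangential} third derivatives $w_{ii1}$, which in a genuinely radial profile evaluated at $\rho_*e_1$ become $w_{ii1}=-W'(\rho_*)/\rho_*^2$ with $W'<0$, so that $w_1 w_{kk1}$ picks up a \emph{negative} $(N-1)$-contribution, again the wrong sign. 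You have not performed the computation, and the sign structure strongly suggests a single radial family will not cover both $\alpha<\tfrac12$ and $\alpha>\tfrac12$. This is a genuine gap: you should either carry out the radial computation and exhibit the data (in which case you will likely discover the sign obstruction), or adopt non-radial polynomial data as in Lemma~\ref{npcl} of the paper and split into the three cases $\alpha\in[0,\tfrac12)$, $\alpha=1$, $\alpha\in(\tfrac12,1)$.
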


Based on the $\frac{1}{2}$-concavity of pressure for the porous medium type reaction-diffusion equation~\eqref{rde} gotten in Theorem \ref{concavityrde}, we can obtain the spatially non-degenerate property of this pressure. %This means that the pressure linearly degenerate to its support boundary. %indicating the sharpness of  spatial Lipschitz regularity.
\begin{theorem}[Sharp Lipschitz continuity]\label{slc} %Let $u$ be the solution for the porous-medium type reaction-diffusion equation \eqref{rde}
Under the initial assumptions \eqref{ini}-\eqref{ini3} and the conditions \eqref{concavityc}, \eqref{abc} and \eqref{abc1} on $G$, and assume that the  pressure $P$ for the porous medium type reaction-diffusion equation \eqref{rde} is smooth to its support boundary. Then, for all $r:=m-1>0$, the spatially non-degenerate estimate of the pressure $P$ holds as
\[|\nabla P|\leq Ce^{rG(0)t},\ t\geq 0,\]
and
\[\big[1+(t+\frac{2}{Kr+2})rG(P)\big]P+(t+\frac{2}{Kr+2})(1+\frac{r}{2})|\nabla P|^2\geq \frac{c}{Kr+2}e^{-rLt}\quad \text{on }\{P>0\}.\]
\end{theorem}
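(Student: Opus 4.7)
I split the proof into the global gradient upper bound and the non\-degenerate lower bound, attacking each via a parabolic maximum/minimum principle applied to a carefully chosen auxiliary quantity. The decisive input throughout is Theorem~\ref{concavityrde}, which supplies the \emph{matrix} form of $\frac{1}{2}$-concavity,
\begin{equation*}
D^2 P \;\leq\; \frac{\nabla P \otimes \nabla P}{2P}\qquad \text{on } \{P>0\},
\end{equation*}
whose trace reads $P\Delta P \leq \tfrac{1}{2}|\nabla P|^2$.

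\textbf{Upper bound $|\nabla P|\leq Ce^{rG(0)t}$.} Differentiating \eqref{pe} and using Bochner's identity $\nabla P\cdot \nabla\Delta P = \tfrac12\Delta|\nabla P|^2 - |D^2 P|^2$, one derives
\begin{equation*}
w_t - rP\Delta w - 2\nabla P\cdot\nabla w \;=\; 2rw\Delta P - 2rP|D^2 P|^2 + 2rG(P)w + 2rPG'(P)w,\qquad w:=|\nabla P|^2.
\end{equation*}
Set $\widetilde w := we^{-2rG(0)t}$; the reaction terms become $2r(G(P)-G(0))w + 2rPG'(P)w$, both non-positive because $G$ is decreasing. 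At any interior spatial maximum of $\widetilde w$ on $\{P>0\}$, $\nabla\widetilde w = 0$ forces $D^2 P\,\nabla P = 0$, i.e.\ $\nabla P$ is in the kernel of $D^2 P$. Combined with the matrix $\frac{1}{2}$-concavity this makes $D^2 P$ negative semidefinite (zero eigenvalue along $\nabla P$, non-positive on $(\nabla P)^\perp$), so $\Delta P\leq 0$ and hence $2rw\Delta P - 2rP|D^2 P|^2\leq 0$. The parabolic maximum principle then yields $\widetilde w \leq \sup|\nabla P_0|^2 \leq C^2$, i.e.\ $|\nabla P|\leq Ce^{rG(0)t}$.

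\textbf{Lower bound.} Introduce
\begin{equation*}
\Phi(x,t) \;:=\; e^{rLt}\Bigl\{\bigl[1+\tau rG(P)\bigr]P + \tau\bigl(1+\tfrac{r}{2}\bigr)|\nabla P|^2\Bigr\},\qquad \tau := t+\frac{2}{Kr+2},
\end{equation*}
with the target $\Phi\geq c/(Kr+2)$ on $\{P>0\}$. At $t=0$ one checks $\Phi(\cdot,0)\geq c/(Kr+2)$ from \eqref{ini3}, after verifying that the prefactor $1+\tau_0 rG(P_0)$ is non-negative via \eqref{abc}--\eqref{abc1}. I then compute $\Phi_t$ using \eqref{pe} together with the equation for $|\nabla P|^2$ derived above, and exploit $\frac{1}{2}$-concavity in the trace form $rP\Delta P\leq \tfrac{r}{2}|\nabla P|^2$ to control $P_t\leq (1+r/2)|\nabla P|^2 + rPG(P)$, and in the matrix form to control the Bochner term $-2rP|D^2 P|^2$ against $2r|\nabla P|^2\Delta P$. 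The bound \eqref{abc1} $-PG'\leq L-K$ is used to absorb the reaction-generated terms into the prefactor $e^{rLt}$, while \eqref{ini2} (i.e.\ the meaning of $K$) pins down the shift $t_0 = 2/(Kr+2)$ via an Aronson--B\'enilan-type lower bound of the shape $r\Delta P + rG(P)\geq -2/\tau$. The algebra is tuned so that the coefficient $(1+r/2)$ is the unique one for which the leading-order diffusion/cross terms cancel, leaving a minimum-principle differential inequality for $\Phi$ whose initial datum propagates forward.

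\textbf{Main obstacle.} The principal difficulty sits in the lower bound: the Bochner term $-2rP|D^2 P|^2$ in the $|\nabla P|^2$-equation must be balanced against the cross term $2r|\nabla P|^2\Delta P$ with \emph{no} slack, which forces one to use the \emph{matrix} form of the $\frac{1}{2}$-concavity rather than merely its trace, and to pin down the coefficients $(1+r/2)$ and $2/(Kr+2)$ exactly. A secondary issue is the degeneracy at the free boundary $\{P=0\}$: one argues on $\{P>\varepsilon\}$ and passes $\varepsilon\to 0$, using the assumed smoothness of $P$ up to its support boundary to justify the limit.
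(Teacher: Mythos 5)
Your plan for the gradient upper bound is essentially the paper's argument, repackaged more cleanly. The decisive interior step is identical in substance: at a maximum of the (rescaled) quantity $|\nabla P|^2$, the first--order condition forces $D^2 P\,\nabla P=0$, and the $\tfrac12$-concavity then forces $\Delta P\leq 0$. The paper does this by rotating coordinates so that $\nabla P\parallel e_N$, extracting $P_{NN}=0$ from $\nabla X=0$ and $P_{ii}\leq 0$ for $i<N$ from $\tfrac12$-concavity; your ``kernel of $D^2P$ plus matrix inequality'' phrasing is a slicker way of saying the same thing. However, you have a genuine gap at the free boundary. The maximum of the auxiliary quantity over $\overline{\{P>0\}}\times[0,T]$ may sit on $\partial\{P(\cdot,t)>0\}$, and there the interior argument is unavailable. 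Your proposed fix --- argue on $\{P>\varepsilon\}$ and let $\varepsilon\to 0$ --- does not obviously close, because on the lateral set $\{P=\varepsilon\}$ you have no boundary information about $|\nabla P|^2$, so the maximum may migrate to that lateral boundary for each $\varepsilon$ and you learn nothing in the limit. The paper treats the boundary case by hand: after rotating so that $\nabla P\parallel e_N$ at the boundary maximum, it uses $P=0$ there, the sign of $P_N$, the sign of the directional derivative of $X$ at the maximum, and $\tfrac12$-concavity to deduce $\Delta \tilde P\leq 0$ directly on the boundary. You need an analogue of this.

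Your route to the lower bound departs from the paper's in a way that creates, rather than removes, difficulty. The paper applies the minimum principle to
\[
F=(t+\alpha)P_t+P,
\]
whose evolution equation is obtained by differentiating the pressure equation \eqref{pe} \emph{in time}: this produces terms in $P_{tt}$, $\Delta P_t$, $\nabla P_t$, and never a Bochner term $|D^2P|^2$. The resulting inequality
\[
F_t\geq rP\Delta F+2\nabla F\cdot\nabla P-rLF
\]
is clean, with $L$ arising from the Aronson--B\'enilan bound $\Delta P+G(P)\geq -K$ (Lemma~\ref{lAB}) together with \eqref{abc1}, and the $\tfrac12$-concavity is used only in the \emph{very last line}, after the minimum principle has already delivered $F\geq \tfrac{c}{Kr+2}e^{-rLt}$, via the substitution $rP\Delta P\leq \tfrac{r}{2}|\nabla P|^2$. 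Your proposal instead expands $P_t$ from \eqref{pe} and applies the concavity inequality \emph{before} running the minimum principle, so that your $\Phi$ satisfies
\[
\Phi e^{-rLt}=F+\tau r\Bigl(\tfrac12|\nabla P|^2-P\Delta P\Bigr),
\]
i.e.\ $F$ plus $\tau r$ times the concavity deficit. Unlike $F$, this quantity has no reason to satisfy a minimum-principle PDE: to compute $\partial_t\Phi$ you must differentiate $|\nabla P|^2$ in time, which reintroduces exactly the Bochner term $-2rP|D^2P|^2$ that you flag as the ``main obstacle.'' Your assertion that ``the algebra is tuned so that the coefficient $(1+r/2)$ is the unique one for which the leading-order diffusion/cross terms cancel'' is not substantiated, and I do not see how the Bochner term can be absorbed with no slack as you need. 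Moreover, the Aronson--B\'enilan-type bound you invoke, $r\Delta P+rG(P)\geq -2/\tau$, does not match the one actually available (Lemma~\ref{lAB} gives $\Delta P+G(P)\geq -K/(1+rKt)$, from which only the crude $\Delta P+G(P)\geq -K$ is used), so the identification $\alpha=2/(Kr+2)$ does not fall out the way you suggest. The lesson is structural: keep $P_t$ packaged as $P_t$ (time-differentiate, do not space-differentiate), run the minimum principle on $F$, prove Aronson--B\'enilan separately to feed in the lower bound $\Delta P+G(P)\geq -K$, and only then expand and apply $\tfrac12$-concavity once to trade $rP\Delta P$ for $\tfrac{r}{2}|\nabla P|^2$.
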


\begin{remark}
The above theorem further indicates that the pressure linearly degenerates to its spital support boundary at any fixed time, reinforcing the conclusion that the spatial Lipschitz continuity is indeed the sharp regularity as studied in \cite{HQ2024}.
\end{remark}

For the Hele-Shaw problem, we give the assumptions of $G$:
\begin{equation}\label{hsc}
G(\cdot)\in\mathcal{C}^2([0,\infty)),\quad G(0)>0,\quad G',\ G''\leq 0, 
\end{equation}
where $G(\cdot)\equiv\text{const}$ on $\{P>0\}$ is corresponding to the case of incompressible limit for the Fisher-KPP equation $G(P):=u_M-u$ in \eqref{rde}. %Let \begin{equation}\label{omega0}\Omega_0:=\{u_0=1\}.\end{equation}
At $t=0$, the initial pressure $P_0$ solves an elliptic equation with Dirichlet boundary as
\begin{equation}\label{hs0}
\begin{cases}
-\Delta P_0=G(P_0),\quad &\text{in }\Omega_0,\\
P_0=0,\quad&\text{on }\partial\Omega_0,
\end{cases}
\end{equation}
where 
\[\Omega_0:=\{u_0=1\}.\]

\begin{theorem}[Sharp concavity for Hele-Shaw problem]\label{schsp} Under the conditions \eqref{hsc} on $G$, assume that the initial density $0\leq u_0\leq 1$ is compactly supported and the level set $\Omega_0$  is convex with smooth boundary. Then, the initial pressure $P_0$, solving the elliptic problem \eqref{hs0}, is $\alpha$-concave for any  $\alpha\in[0,\frac{1}{2}]$, and $\frac{1}{2}$ serves as the largest index for the power concavity. Let $(u,P)$ be the solution for the Hele-Shaw problem \eqref{hs}.  Assume further that the stiff pressure $P$ is smooth to its support boundary, then the preservation of spatial $\alpha$-concavity for any $\alpha\in[0,\frac{1}{2}]$ holds all the time
\end{theorem}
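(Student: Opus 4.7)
The plan is to combine an elliptic concavity maximum principle at the initial time with Theorem \ref{concavityrde} transferred through the incompressible limit.

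At $t=0$, the pressure $P_0$ solves the semilinear elliptic problem \eqref{hs0} on the convex domain $\Omega_0$ with zero Dirichlet data, and under \eqref{hsc} the nonlinearity $G(P_0)$ is nonnegative and concave in $P_0$. This is the classical setting of the concavity maximum principle (Makar-Limanov in two dimensions, Kennington and Korevaar--Lewis for general $N$), which yields that $\sqrt{P_0}$ is concave on $\overline{\Omega}_0$. I would carry out the standard argument by forming the concavity function of $w=2P_0^{1/2}$, ruling out an interior minimum via the admissibility conditions $G\geq 0$ and $G''\leq 0$, and ruling out a boundary minimum via the boundary regularity of $P_0$ together with $\nabla P_0\neq 0$ on $\partial\Omega_0$, which is a Hopf-type consequence of \eqref{hs0}. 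The $\alpha$-concavity for $\alpha\in(0,\tfrac{1}{2}]$ then follows from $\tfrac{1}{2}$-concavity because $P_0^{\alpha}=(P_0^{1/2})^{2\alpha}$ and $t\mapsto t^{2\alpha}$ is concave and nondecreasing on $[0,\infty)$ when $2\alpha\leq 1$; the log-concavity case $\alpha=0$ follows by taking the limit $\alpha\to 0^+$ in $(P_0^\alpha-1)/\alpha\to \log P_0$, which preserves concavity. For sharpness of the index $\tfrac{1}{2}$, I would specialize to $G\equiv\mathrm{const}$ on $\{P>0\}$, which is precisely the incompressible Fisher-KPP setting described after \eqref{hs}, so that \eqref{hs0} reduces to a torsion-type equation, and then invoke the classical Makar-Limanov--Kennington counterexamples on suitably elongated convex domains to witness the failure of $\alpha$-concavity for any $\alpha>\tfrac{1}{2}$.

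To propagate $\alpha$-concavity in time, I would approximate the Hele-Shaw pressure by pressures $P_m$ of the porous medium type reaction-diffusion equation \eqref{rde} with $r=m-1>0$ and with initial pressure $P_0$. The assumptions \eqref{hsc} on $G$ imply \eqref{concavityc} (since $G'\leq 0$ and $G''\leq 0$ give $3G'+2PG''\leq 0$) and \eqref{g0b} with $\gamma=1$ (since $G\in\mathcal{C}^2$ on $[0,P_H]$); together with the $\tfrac{1}{2}$-concavity of $P_0$ just established, Theorem \ref{concavityrde} applies and yields that $P_m(\cdot,t)$ is $\tfrac{1}{2}$-concave for every $t\geq 0$. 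The rigorous incompressible limit results cited in the Introduction give $P_m(\cdot,t)\to P(\cdot,t)$ locally uniformly as $m\to\infty$, hence $P(\cdot,t)^{1/2}$ is the locally uniform limit of concave functions and is therefore concave. The conclusion for $\alpha\in[0,\tfrac{1}{2}]$ then follows by the same composition and limit reasoning as at the initial time.

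The most delicate point will be the elliptic concavity principle in the first step: one must verify that the concavity function of $2P_0^{1/2}$ cannot attain its infimum either in the interior of $\Omega_0$, which is ruled out by the structural hypotheses on $G$, or on $\partial\Omega_0$, which is ruled out by $\nabla P_0\neq 0$ there. A secondary technical issue is ensuring that the incompressible limit in the second step is strong enough, i.e. locally uniform rather than merely weak, so that pointwise concavity of the approximants passes to the limit; this should follow from the $L^\infty$ bounds and the compactness estimates established in the cited incompressible limit literature.
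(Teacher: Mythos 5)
Your treatment of the initial time is essentially the paper's: the elliptic concavity maximum principle (Theorem~\ref{ect}, in the paper via Bianchini--Salani) gives $\tfrac12$-concavity of $P_0$, and the descent to smaller indices is exactly Kennington's Lemma~\ref{cam}. Your sharpness argument is vaguer than the paper's, which does not merely cite a known counterexample but constructs an explicit shrinking sequence of smooth convex domains $\Omega_k$ converging to a cone $K\cap B_1$ and derives a contradiction at the vertex via the barrier $v=x_N^2-a^2|x'|^2$ and the $\alpha$-concavity inequality $P_\infty^{\alpha}(tz)\geq tP_\infty^{\alpha}(z)$. You would need a comparably concrete construction; appealing to "Makar-Limanov--Kennington counterexamples on elongated convex domains" does not by itself show failure of $\alpha$-concavity for all $\alpha>\tfrac12$ for the semilinear problem \eqref{hs0}.

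The genuine gap is in the time propagation. The paper never passes through the porous medium approximation: it argues directly with the Hele-Shaw pressure, showing by contradiction and an l'Hospital computation on the free boundary that the support $\Omega(t)$ stays convex, and then reapplies Theorem~\ref{ect} at each fixed time. Your route via $P_m$ of \eqref{rde} and the incompressible limit is a different idea, but it has several unaddressed obstructions. First, prescribing the PME initial pressure to be the Hele-Shaw $P_0$ forces the PME initial density $u_0^m=(\tfrac{m-1}{m}P_0)^{1/(m-1)}$, which converges to $\mathbb{1}_{\Omega_0}$, not to the given $u_0$; since the Hele-Shaw boundary speed depends on the exterior density (the domain expands into the region where $u<1$), this approximation does not in general converge to the Hele-Shaw solution with initial datum $u_0$. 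Second, Theorem~\ref{concavityrde} is itself conditional: it assumes each $P_m$ is smooth to its support boundary for all time, an assumption you would need to justify for the whole approximating family, not just assert. Third, even granting these, you need $P_m(\cdot,t)\to P(\cdot,t)$ locally uniformly on the open set $\{P(\cdot,t)>0\}$, together with convergence of the supports, to pass pointwise concavity to the limit; the cited incompressible-limit results give strong $L^p$-type convergence but locally uniform convergence of pressures at a fixed time is a stronger statement that is not automatic. These issues make the limiting argument substantially harder to close than the paper's direct convexity-of-$\Omega(t)$ argument.
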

%\begin{remark}Since the exponent $\frac{1}{2}$ for the concavity is the upper bound, and the preservation of $\alpha$-concavity for the pressure holds for all $\alpha\in [0,\frac{1}{2}]$, which implies that  $\frac{1}{2}$-concavity for the pressure is sharp.   \end{remark}
\begin{remark}
    For the porous medium type reaction-diffusion  equation \eqref{rde}, $\frac{1}{2}$ is the only exponent with respect to the preservation of the concavity for the pressure all the time. However, turning to the Hele-Shaw problem~\eqref{hs}, $\frac{1}{2}$ is the upper bound of exponent concerned to the preservation of concavity over time. The reason is that the pressure for the porous medium equation solves a degenerate parabolic equation \eqref{pe}, but the pressure for the Hele-Shaw problem \eqref{hs} solves an elliptic equation \eqref{hss} in each fixed time and the free boundary can form a parabolic equation with fractional diffusion; see \cite{KT2021}. Hence, the shape of the support determines the shape of the pressure.

\end{remark}
\paragraph{Organisation of this paper.} In Section \ref{rdpme}, we prove the sharp power concavity of the pressure for the porous medium type reaction-diffusion equation (Theorem \ref{concavityrde}) and the further spatially non-degenerate estimate of the pressure for this equation (Theorem \ref{slc}). In Section \ref{hsp}, we verify  the sharp power concavity of the pressure for the Hele-Shaw problem (Theorem \ref{schsp}). Finally, conclusions and perspectives will be given in Section \ref{cp}.

\vspace{2mm}

To simplify the notation, we denote $\partial_{x_k}P$ by $P_k$ in the rest of this paper.
\section{Porous medium type reaction-diffusion equation}\label{rdpme}
We show that the $\frac{1}{2}$-concavity of pressure remains all the time for the porous medium type reaction-diffusion equation~\eqref{rde}. Then, by constructing the counterexamples, we prove the non-preservation of $\alpha$-concavity of the pressure for any $\alpha\in [0,1]/\{\frac{1}{2}\}$ over time. Furthermore, based on the gotten $\frac{1}{2}$-concavity of pressure,  the spatially non-degenerate estimate can be established.

\subsection{Root concavity}
 %We show that  $\frac{1}{2}$-concavity of pressure for the porous medium type reaction-diffusion equation \eqref{rde} preserves all the time. 
 Authors in \cite{DHL2001} proved that the $\frac{1}{2}$-concavity of pressure remains all the time for the PME \eqref{pme}. Via following the approach of \cite{DHL2001}, then, overcoming the logistic growth effect, we are going to prove that concave  preservation for the pressure  is valid all the time. 
\begin{lemma}\label{12concave}
For the porous medium type reaction-diffusion equation \eqref{rde}, assume that the initial pressure $P_0$ is $\frac{1}{2}$-concave  and the pressure $P$ is smooth to its support boundary. Then, $P$ remains spatially $\frac{1}{2}$-concave all the time. 
\end{lemma}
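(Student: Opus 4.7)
The plan is to substitute $w := \sqrt{P}$ and show that its Hessian $H := D^2 w$ remains negative semi-definite for all $t \geq 0$. Using $P = w^2$, $P_i = 2w w_i$, $P_{ij} = 2 w_i w_j + 2 w w_{ij}$, and $\Delta P = 2|\nabla w|^2 + 2 w \Delta w$, the pressure equation \eqref{pe} reduces on $\{w>0\}$ to
\[
w_t = r w^2 \Delta w + (r+2)\, w\,|\nabla w|^2 + \tfrac{r}{2}\, w\, G(w^2),
\]
and proving the spatial $\tfrac12$-concavity of $P$ is equivalent to showing $H \leq 0$ on the support.

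The next step is to derive a closed parabolic inequality for the largest eigenvalue of $H$, following the matrix-maximum-principle strategy of \cite{DHL2001} for the pure PME. Differentiating the $w$-equation twice in space yields, schematically,
\[
\partial_t w_{ij} = r w^2 \Delta w_{ij} + \mathcal{B}_{ij}\bigl[w,\nabla w, D^2 w\bigr] + \tfrac{r}{2}\, F'(w)\, w_{ij} + \tfrac{r}{2}\, F''(w)\, w_i w_j,
\]
where $F(w) := w\, G(w^2)$ and $\mathcal{B}_{ij}$ collects the bilinear remainders from $r w^2 \Delta w$ and $(r+2) w |\nabla w|^2$. A direct computation gives $F''(w) = 2w\,[3 G'(P) + 2 P G''(P)]$, so hypothesis \eqref{concavityc} is exactly the statement $F''(w) \leq 0$, which renders the rank-one perturbation $F''(w)\, w_i w_j$ negative semi-definite. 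The diffusion-driven part $\mathcal{B}_{ij}$ has precisely the algebraic structure exploited in \cite{DHL2001} and preserves negative semi-definiteness of $H$.

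To conclude, set $\mu(t) := \sup_{x \in \{P > 0\}} \lambda_{\max}\bigl(H(x,t)\bigr)$. The initial assumption \eqref{ini} gives $\mu(0) \leq 0$. Assume for contradiction that $t_0 > 0$ is the first time $\mu$ reaches $0$, attained at an interior point $x_0$ along a unit direction $\xi$; the scalar $\phi(x,t) := w_{ij}(x,t)\,\xi^i \xi^j$ then has a space-time local maximum equal to $0$ at $(x_0,t_0)$. Spatial criticality $\nabla\phi(x_0,t_0)=0$ kills the third-derivative terms $w_{ijk}\xi^i\xi^j$; the elliptic contribution $r w^2 \Delta\phi$ is $\leq 0$; invoking $H(x_0,t_0)\xi = 0$ (since $\xi$ is a null eigenvector) together with $F''(w)\leq 0$ shows the remaining algebraic residue to be non-positive. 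Combined with $\phi_t(x_0,t_0) \geq 0$, this contradicts the strict sign produced by the evolution equation, yielding $\mu(t)\leq 0$ for all $t$.

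The principal obstacle will be handling the free boundary, where $w \to 0$, the leading coefficient $r w^2$ degenerates, and $\sqrt{P}$ is generically only Lipschitz up to $\partial\{P>0\}$. The hypothesis that $P$ is smooth to its support boundary, combined with Darcy's law $V = \nabla P \cdot \vec n = 2 w\, \nabla w \cdot \vec n$, will be used to rule out that $\mu(t)$ is attained on $\partial\{P>0\}$; a barrier or approximation argument (for instance lifting $P_0$ by a small positive constant inside the support and passing to the limit) may be needed to legitimize the matrix maximum principle in the degenerate regime. A technical subtlety is that $\lambda_{\max}(H)$ need not be simple; the standard device of testing against a fixed eigenvector $\xi$, rather than diagonalizing $H$ globally, circumvents this.
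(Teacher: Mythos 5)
Your high-level plan is aligned with the paper's strategy: both use a matrix maximum principle for the ``normalized Hessian'' of $P$ following \cite{DHL2001}, and your observation that $F''(w) = 2w\bigl[3G'(P)+2P G''(P)\bigr]$, so that hypothesis~\eqref{concavityc} is exactly $F''\leq 0$, is correct and is precisely the mechanism the paper's interior estimate relies on (the term $\frac{r(P_iV^i)^2}{2}\bigl(3G'+2PG''\bigr)\leq 0$ in the paper's computation). However, two gaps remain that the paper closes and you do not.

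First, the interior argument is not actually completed. You assert that ``$\mathcal{B}_{ij}$ has precisely the algebraic structure exploited in \cite{DHL2001} and preserves negative semi-definiteness of $H$,'' but this is the non-trivial part: the paper works with $A_{ij}=P_{ij}-\frac{P_iP_j}{2P}$ rather than $D^2\sqrt{P}$ precisely so the bilinear remainders organize into a perfect square $2\bigl(P_{ik}-\tfrac{P_iP_k}{2P}\bigr)\bigl(P_{jk}-\tfrac{P_jP_k}{2P}\bigr)V^iV^j$ plus multiples of $A_{ij}V^iV^j$, which can be absorbed because $V$ is chosen to be the null eigenvector at the touching point. The paper also introduces a strictly increasing barrier $\Psi(t)$ with $\Psi'\geq 2C(\Psi+\Psi^2)$ to obtain a \emph{strict} sign and contradict $Z_t\geq 0$; your plan mentions neither the choice of eigenvector extension (with prescribed $V^i_j,V^j_t,V^j_{kk}$ at the touching point, needed so the second-derivative of the test quantity closes) nor a quantitative perturbation that survives the degeneracy of the coefficient $rP$.

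Second, and more seriously, the boundary case is hand-waved. You note that $w$ is only Lipschitz up to $\partial\{P>0\}$ and that ``a barrier or approximation argument \dots may be needed,'' but you do not supply one. This is where most of the work in the paper's proof lives (Case 2): one must (i) show that a null direction $V_0$ at a boundary touching point is necessarily tangent, otherwise $A_{ij}V^iV^j\to -\infty$; (ii) restrict to a moving boundary point $x(t)$ with $\tfrac{dx^k}{dt}=-P_k$ and a compatibly transported tangent vector $V(t)$ with $\tfrac{dV^k}{dt}=-P_{ik}V^i$; (iii) invoke the key third-derivative sign $P_kP_{ijk}V_0^iV_0^j\leq 0$ from \cite[Lemma~2.2]{DHL2001}; and (iv) run a one-dimensional ODE comparison on $Q(t)=Z(x(t),t)$. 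Without this the free-boundary case is genuinely open, since the parabolic equation degenerates exactly there and the interior maximum principle does not reach it. A shift $P_0\mapsto P_0+\varepsilon$ would destroy the free boundary altogether (the support becomes all of $\mathbb{R}^N$), so the regularization you sketch would require substantial additional justification to recover the compactly-supported case in the limit.
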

\begin{proof}[\bf Proof] If 
\begin{equation}\label{aijn}A_{ij}=P_{ij}-\frac{P_iP_j}{2P}\leq 0\end{equation}
holds in the sense of symmetric matrix, then 
\begin{equation*}
D_{ij}^2(\sqrt{P})=\frac{1}{2\sqrt{P}}(P_{ij}-\frac{P_iP_j}{2P})
\end{equation*}
is a non-positive symmetric matrix which means that the pressure $P$ is $\frac{1}{2}$-concave. 
To verify \eqref{aijn},  we will show 
\begin{equation}\label{aijdelta}A_{ij}\leq \textbf{0}\text{ in the sense of symmetric matrix}.\end{equation}
For any fixed time $T>0$, we next prove that the quadratic satisfies
\begin{equation}\label{qf}
Z=(P_{ij}-\frac{P_iP_j}{2P}-\Psi I_{ij})V^iV^j<0,
\end{equation}
for any non zero vector filed $V(x,t)\in\mathbb{R}^N$ in $[0, T]$, where the chosen positive function $\Psi=\Psi(t)$ solves
\begin{equation}\label{psi}\Psi'\geq 2C(\Psi+\Psi^2)\end{equation}
\textcolor{red}{with} \begin{equation}\label{Cdef}
C\geq r+2+r\big(c+G(0)\big)+C_1
\end{equation}with $c>\max\{0, \max\limits_{P\in[0,P_H]}-(G(P)+PG'(P))\}$ and $C_1=\sup\limits_{\overline{\{P>0\}}\cap\{ t\leq T\}}|\Delta P|$.

The expected estimate \eqref{aijdelta} is verified by contradiction. If \eqref{qf} does not hold, for some  $\Psi(0)>0$ and $T>0$, there exists $x_0\in\overline{\Omega(t_0)}:=\overline{\{P(\cdot,t_0)>0\}}$, $V_0$ with the smallest $t_0\in(0,T]$ such that 
\[Z(x_0,t_0)=0.\]
  
The rest of proof is divided into two cases including the interior case and the boundary case.
\paragraph{Case 1, estimate in the interior.} Assume that $x_0$ is the interior point at the time $t_0$.
We extend $V_0$ to be a smooth vector field $V$ in a neighborhood of $(x_0,t_0)$  satisfying
\begin{equation*}V^i_j=\frac{1}{2P}(P_kV^k)I_{ij}\text{ and }V^j_t=0,\ V^j_{kk}=0\text{ at } (x_0,t_0).\end{equation*}
On the one hand, differentiating \eqref{pe}, we have 
\[P_{it}=rPP_{ikk}+rP_iP_{kk}+2P_kP_{ik}+rP_i(G(P)+PG'(P))\]
and 
\begin{equation*}
\begin{aligned}P_{ijt}=&rPP_{ijkk}+2rP_iP_{jkk}+rP_{ij}P_{kk}+2P_{ik}P_{jk}+2P_{k}P_{ijk}\\
&+rP_{ij}(G(P)+PG'(P))+rP_iP_j(2G'(P)+PG''(P)).
\end{aligned}
\end{equation*}
Then, it holds at $(x_0,t_0)$ that 
\begin{equation}\label{kk1}
\begin{aligned}
(P_{ij}V^iV^j)_t=\big[&rPP_{ijkk}+2rP_iP_{jkk}+rP_{ij}P_{kk}+2P_{k}P_{ijk}+2P_{ik}P_{jk}\\
&+rP_{ij}(G(P)+PG'(P))+rP_iP_j(2G'(P)+PG''(P))\big]V^iV^j.  \end{aligned}
\end{equation}
In addition, we calculate directly and obtain 
\[(P_{ij}V^iV^j)_k=[P_{ijk}+\frac{P_{jk}P_i}{P}]V^iV^j\]
and 
\begin{equation*}\label{pijkk}
    P(P_{ij}V^iV^j)_{kk}=[PP_{ijkk}+2P_{jkk}P_i+\frac{P_{kk}P_iP_j}{2P}]V^iV^j\quad\text{at }(x_0,t_0).
\end{equation*}
 Hence, it holds 
\begin{equation*}
    \begin{aligned}
(P_{ij}V^iV^j)_t=& rP(P_{ij}V^iV^j)_{kk}+2P_k(P_{ij}V^iV^j)_k\\
&+rP_{kk}[(P_{ij}-\frac{P_iP_j}{2P})V^iV^j]-2\frac{P_{jk}P_iP_k}{P}V^iV^j\\
&+2P_{ik}P_{jk}V^iV^j+[rP_{ij}\big(G(P)+PG'(P)\big)\\
&+rP_iP_j\big(2G'(P)+PG''(P)\big)]V^iV^j\text{ at }(x_0,t_0). 
    \end{aligned}
\end{equation*}
 On the other hand, we have 
\begin{equation*}
\begin{aligned}
(\frac{P_iP_j}{P}V^iV^j)_t=&[2rP_jP_{ikk}+\frac{rP_iP_jP_{kk}}{P}+\frac{4P_jP_kP_{ik}}{P}-\frac{P_iP_jP_k^2}{P^2}]V^iV^j\\
&+\big[\frac{2P_iP_j(G(P)+PG'(P))}{P}-\frac{P_iP_jG(P)}{P}\big]V^iV^j,
\end{aligned}
\end{equation*}
\begin{equation*}
(\frac{P_iP_j}{P}V^iV^j)_k=\frac{2P_{ik}P_j}{P}V^iV^j,
\end{equation*}
and 
\begin{equation*}\label{pipjp}
    P(\frac{P_iP_j}{P}V^iV^j)_{kk}=\big[2P_{ikk}P_j+\frac{P_{kk}P_iP_j}{P}+2\big(P_{ik}-\frac{P_iP_k}{2P}\big)\big(P_{jk}-\frac{P_jP_k}{2P}\big)\big]V^iV^j
\end{equation*} 
at $(x_0,t_0)$.
It follows
\begin{equation}\label{k2}
\begin{aligned}
    (\frac{P_iP_j}{2P}V^iV^j)_t=&rP(\frac{P_iP_j}{2P}V^iV^j)_{kk}+2P_k(\frac{P_iP_j}{2P}V^iV^j)_k\\
    &-r\big(P_{ik}-\frac{P_iP_k}{2P}\big)\big(P_{jk}-\frac{P_jP_k}{2P}\big)V^iV^j-\frac{P_iP_jP_k^2}{2P^2}V^iV^j\\
    &+rP_iP_j[\frac{G(P)+PG'(P)}{2P}+\frac{G'(P)}{2}]V^iV^j.\\
\end{aligned}
\end{equation}

The vector $V_0$ is a null eigenvector for the symmetric  matrix
\[P_{ij}-\frac{P_iP_j}{2P}-\Psi I_{ij}\quad \text{at }(x_0,t_0),\]
which implies
\[(P_{ij}-\frac{P_iP_j}{2P})V^i=\Psi V^j\quad\text{ at }(x_0,t_0).\]
Hence, combining the above \eqref{kk1} and \eqref{k2} yields
\begin{equation*}
Z_t\leq P Z_{kk}+2rP_kZ_k+[r\Psi^2-\Psi_t]|V|^2+R
+\frac{r(P_iV_i)^2}{2}\big(3G'(P)+2PG''(P)\big)
\end{equation*}
with 
\begin{equation*}
    \begin{aligned}
  R=&2P_{ik}P_{jk}V^iV^j-2\frac{P_{jk}P_iP_k}{P}V^iV^j+\frac{P_{i}P_jP_k^2}{2P^2}V^iV^j+rP_{ij}(G(P)+PG'(P))\\
  &-r(\frac{G(P)+PG'(P)}{2P})P_iP_jV^iV^j\\
  =&2\big(P_{ik}-\frac{P_iP_k}{2P}\big)\big(P_{jk}-\frac{P_jP_k}{2P}\big)V^iV^j+r(G(P)+PG'(P))(P_{ij}-\frac{P_iP_j}{2P})V^iV^j\\
  \leq & (cr\Psi+2\Psi^2)|V|^2,
    \end{aligned}
\end{equation*}
where $c>\max\{0, \sup\limits_{P\in[0,P_H]}-(G(P)+PG'(P))\}$.
It further follows from the condition \eqref{concavityc} on $G$ and the inequality \eqref{psi} on $\Psi$ that 
\begin{equation}\label{zt}
\begin{aligned}
Z_t\leq&  PZ_{kk}+rP_kZ_k+\big[(r+2)\Psi^2+cr\Psi-\Psi'\big]|V|^2\\
\leq&- C(\Psi+\Psi^2)|V|^2\quad\text{at }(x_0,t_0),
\end{aligned}
\end{equation}
where $C>0$ is given by \eqref{Cdef}. Since $Z$ obtain the maximum at the interior point $(x_0,t_0)$ for $t\leq t_0$, it is immediate that
\[Z_t\geq 0,\quad Z_{kk}\leq 0,\quad Z_k=0\text{ at }(x_0,t_0).\]
Inserting the above into \eqref{zt} derives a contradiction
\[0\leq -C(\Psi+\Psi^2)|V|^2< 0,\]
which means that at the first time $t_0$, $Z(x_0,t_0)=0$ does not occur in the interior.

\paragraph{Case 2, estimate on the boundary.} Suppose that $x_0\in\partial\{x\in\mathbb{R}^N:P(x,t_0)>0\}$ occurs at the spatial support boundary, that is,
\begin{equation}\label{zb}Z(x_0,t_0)=0\end{equation}
for some nonlinear vector $|V_0|=1$, and $Z\leq 0$ for all $0\leq t\leq t_0$. %We verify that \eqref{zb} does not holds by contradiction.

Since $P=0$ on the boundary and $P$ is smooth to its support boundary, it concludes that on the boundary, $V$ is tangent to the boundary
\begin{equation}\label{pivi0}P_i(x,t)V^i=0,\quad 0\leq t\leq t_0.\end{equation}
If not,  that is $|P_iV^i|>0$  on the boundary, the definition of $Z$ \eqref{qf}  implies 
\[Z=-\infty,\]
that is impossible because of the continuity of $Z$. %in the neighborhood of $(x_0,t_0)$ on the boundary.  
Since %$P$ is smooth to the boundary, 
$P$ has the degeneracy of one order to the boundary, the estimate \eqref{pivi0} derives 
\begin{equation}\label{pvijp}
    \frac{P_iV^iP_jV^j}{P}=0
\end{equation}
on the boundary.  Furthermore,  we pick a path $x(t)$ for $t\leq t_0$ with $x(t)\in\partial \Omega(t)$ and $x(t_0)=x_0$. We also choose a path $V(t)$ for $t\leq t_0$ with $V(t)$ always tangent to the boundary at the point $x(t)$  for $t\leq t_0$, and $V(t_0)=V_0$. 

For the above $x_0,t_0,V_0$, \cite[Lemma 2.2]{DHL2001} provides a preliminary estimate 
\begin{equation}\label{np}
P_kP_{ijk}V_0^iV_0^j\leq 0\text{ at }x_0\in\partial\Omega(t_0).
\end{equation}

The chosen $x(t)$ satisfies $P(x(t),t)=0$, then the yielded
$\frac{d}{dt} P(x(t),t)=0$
 makes
\[P_t+P_k\frac{dx^k}{dt}=0.\]
Due to the pressure equation~\eqref{pe}, we obtain
\[P_t=rPP_{kk}+|P_k|^2+rPG(P)=|P_k|^2\ \text{ on the support  boundary.}\] 
 Thus,  \[P_k(\frac{dx^k}{dt}+P_k)=0\] holds by choosing 
\[\frac{dx^k}{dt}=-P_k.\]
Thanks to $P_kV^k=0$ along $x(t)\in\partial \Omega(t)$ from \eqref{pivi0}, it is satisfied 
\[\frac{d}{dt}(P_kV^k)=0.\]
The chain rule of compound derivative yields 
\begin{equation}\label{pkvkd}P_k\frac{d}{dt}V^k+P_{kt}V^k+P_{jk}\frac{dx^j}{dt}V^k=0.\end{equation}
Using the pressure equation \eqref{pe}, it follows 
\[P_{it}=rPP_{ikk}+rP_iP_{kk}+2P_{ik}P_k+rP_i(G(P)+PG'(P)),\]
which further implies 
\begin{equation}\label{pit}P_{ti}V^i=2P_kP_{ik}V^i\end{equation}
 on the boundary. Substituting \eqref{pit} into \eqref{pkvkd} yields 
\[P_k(\frac{dV^k}{dt}+P_{ik}V^i)=0.\]
It is immediate to hold by choosing
\[\frac{dV^k}{dt}=-P_{ik}V^i.\]

On account of \eqref{pvijp} along the path $x(t)$, we naturally consider the function 
\[Q(t)=Z(x(t),t)=(P_{ij}-\Psi I_{ij})V^iV^j,\]
with $Q(t_0)=0$ and $Q(t)\leq 0\text{ for } t\leq t_0$, which implies 
\begin{equation}\label{aqt}
\frac{dQ}{dt}\geq0\quad \text{at }t=t_0.
\end{equation}

We directly compute the time derivative 
\[\frac{dQ}{dt}=(P_{ijt}-\Psi' I_{ij})V^iV^j+P_{ijk}\frac{dx^k}{dt}V^iV^j+2(P_{ij}-\Psi I_{ij})\frac{dV^i}{dt}V^j.\]
In addition, it follows 
\begin{equation*}
\begin{aligned}P_{ijt}=&rPP_{ijkk}+rP_iP_{jkk}+rP_jP_{ikk}+rP_{ij}P_{kk}+2P_{ik}P_{jk}+2P_{k}P_{ijk}\\
&+rP_{ij}(G(P)+PG'(P))+rP_iP_j(2G'(P)+PG''(P)).
\end{aligned}
\end{equation*}
Due to $P=0$ and $P_iV^i=0$ on the path $(x(t),t)$, we get 
\[P_{ijt}V^iV^j=2P_{k}P_{ijk}V^iV^j+rP_{kk}(P_{ij}V^iV^j)+2(P_{ik}V^i)(P_{jk}V^j)+rP_{ij}V^iV^jG(0).\]
Taking  $P_{ij}V^i=\Psi V^j$ at $t=t_0$, $\frac{dx^k}{dt}=-rP_k$, $\frac{dV^k}{dt}=-rP_{ik}V^i$ and the estimate \eqref{np} into account yields 
\begin{equation}\label{qt}
\frac{dQ}{dt}\leq \big(-\Psi'+P_{kk}\Psi+2\Psi^2+G(0)\Psi \big)|V|^2\leq \big(-\Psi'+C\Psi+C\Psi^2 \big)|V|^2 \quad\text{at }x_0\in\partial\Omega(t_0).
\end{equation}
Since  $\Psi'>2C(\Psi+\Psi^2)$ with $C$ being defined by \eqref{Cdef}, the inequality \eqref{qt} yields
\[\frac{dQ}{dt}<-C(\Psi+\Psi^2)|V|^2<0\quad\text{at }t=t_0,\] 
which is in contradiction with \eqref{aqt}.
%The above inequality is in contradiction to the fact that $Q(t_0)$ is a maximum for $0\leq t\leq t_0$, that is $$\frac{dQ}{dt}\geq0\quad \text{at }t=t_0.$$

\vspace{2mm}

Combining the discussions of both  \textbf{Case 1, estimate in the interior} and \textbf{Case 2, estimate on the boundary}, it immediately becomes clear that $Z<0$ for all $x\in \overline{\Omega(t)}\cap [0,T]$ for any $T>0$. Finally, since $\Psi$ can be as small as we expect, $\sqrt{P}$ is spatially concave on its support. 
\end{proof}
\subsection{Non preservation of power concavity}
We follow the approach of \cite{ST2024,CW2024} to prove the non-preservation of $\alpha$-concavity with $\alpha\in[0,1]\backslash\{\frac{1}{2}\}$ over time, which means that the index~$\frac{1}{2}$ for the preservation of concavity in time is unique and sharp for the porous medium type reaction-diffusion equation~\eqref{rde}.  To be more precise, the constructed radially symmetric concave initial datum in \cite{ST2024,CW2024} means that the nonpositivity of the first eigenvalue of the Hessian matrix for the $\alpha$th-power pressure loses instantaneously. Indeed, compared with previous works \cite{ST2024,CW2024} on the  PME \eqref{pme}, the source term causes some new difficulties including the absence of scaling invariance. 

Let us start with stating the main result:
\begin{theorem}\label{npc}
Let $B$ be the open unit ball in $\mathbb{R}^N$ centered at the origin with $N\geq 2$. For any given $\alpha\in[0,1]\backslash\{\frac{1}{2}\}$, there exists $v_0\in C^\infty(\overline{B})$ which is strictly positive on $B$ and vanished on $\partial B$ with the following properties:
\begin{itemize}
    \item[(1)] $v_0$ is $\alpha$-concave on $B$.
    \item[(2)] $\nabla v_0$ does not vanish at any point of $\partial B$.
    \item[(3)] Let $v(t)$ be the solution to \eqref{rde} starting with $v_0$. Then, there exists $\delta>0$ such that $v(t)$ is not $\alpha$-concave  for $t\in(0,\delta)$.
\end{itemize}
\end{theorem}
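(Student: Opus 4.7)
The plan is to adapt the radial counterexample strategy of \cite{ST2024,CW2024}, developed for the pure porous medium equation \eqref{pme}, to the reaction-diffusion setting \eqref{pe}. Since Lemma \ref{12concave} already establishes the preservation of $\tfrac{1}{2}$-concavity, the intuition is that the evolution of the Hessian of $v^\alpha$ carries an $\alpha$-dependent coefficient whose only zero in the relevant range is $\alpha=\tfrac12$; for any $\alpha\neq\tfrac12$, this coefficient will drive the Hessian out of negative semidefiniteness at a well-chosen point and time.

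For a smooth radial function $v(x,t)=g(r,t)$ with $r=|x|$, the Hessian of $v^\alpha$ at $x=re_1$ is diagonal in the polar frame with radial eigenvalue $\lambda_{\mathrm{rad}}(r,t)=(g^\alpha)_{rr}$ and tangential eigenvalue $\lambda_{\mathrm{tan}}(r,t)=(g^\alpha)_r/r$ of multiplicity $N-1$. I would design the initial radial profile $g_0:=g(\cdot,0)$ to be smooth on $[0,1]$, positive on $[0,1)$, vanishing linearly at $r=1$ (ensuring (2)), $\alpha$-concave on the whole ball, and additionally to display at a prescribed interior radius $r_\ast\in(0,1)$ a \emph{marginal} radial concavity, namely $\lambda_{\mathrm{rad}}(r_\ast,0)=0$ together with the vanishing of a sufficient number of its radial derivatives, while $\lambda_{\mathrm{tan}}(r_\ast,0)$ stays strictly negative. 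Such a profile can be hand-built from a polynomial template of the form $g_0^\alpha(r)=A-B(r-r_\ast)^2-C(r-r_\ast)^4+\cdots$ near $r=r_\ast$, with coefficients tuned to cancel the radial Hessian and its first few $r$-derivatives at $r_\ast$, then smoothly glued to a boundary layer at $r=1$ satisfying the required linear nondegeneracy.

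Next I would differentiate \eqref{pe} twice in $r$ and evaluate at $(r_\ast,0)$. After substituting the prescribed vanishing conditions, the computation should collapse to a schematic identity of the form
\begin{equation*}
\partial_t\lambda_{\mathrm{rad}}(r_\ast,0)\;=\;\kappa(\alpha)\,\mathcal{Q}[g_0](r_\ast)\;+\;\mathcal{R}[g_0,G](r_\ast),
\end{equation*}
where $\kappa(\alpha)$ is a rational function of $\alpha$ with a simple root at $\alpha=\tfrac12$ (its presence is forced by the preservation result of Lemma~\ref{12concave}), $\mathcal{Q}[g_0](r_\ast)>0$ is assembled from squares of transverse derivatives of $g_0$ at $r_\ast$, and the remainder $\mathcal{R}[g_0,G]$ collects every contribution from the reaction term $rPG(P)$. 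Exploiting \eqref{concavityc}--\eqref{g0b}, I would rescale $v_0\mapsto\varepsilon v_0$: the leading PME term has a definite scaling exponent in $\varepsilon$ dictated by the quasilinear diffusion, whereas the reaction remainder is controlled through $G(\varepsilon v_0)$ by \eqref{g0b} and scales less favorably. For $\varepsilon$ small enough the principal term dominates, so the sign of $\partial_t\lambda_{\mathrm{rad}}(r_\ast,0)$ is governed by $\mathrm{sgn}(\kappa(\alpha))$; flipping the profile if necessary makes it strictly positive, so that $\lambda_{\mathrm{rad}}(r_\ast,t)>0$ on some interval $(0,\delta)$, contradicting $\alpha$-concavity.

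The main obstacle will be the explicit bookkeeping needed to isolate $\kappa(\alpha)$ and to verify that its only root in $[0,1]$ sits at $\alpha=\tfrac12$: the twice-differentiated equation \eqref{pe} produces a large number of terms, and only after imposing the vanishing conditions of the construction does the $\alpha$-dependence condense into a single simple factor. In the pure PME case \cite{ST2024,CW2024} this step is already delicate; here the additional reaction terms require a careful scaling analysis to confirm that \eqref{concavityc}--\eqref{g0b} actually suffice to render them subdominant. A secondary and more technical point is to interpolate the marginal behavior at $r_\ast$ with the linear vanishing at $r=1$ into a single smooth globally $\alpha$-concave radial profile, but this reduces to a standard interpolation exercise once the local conditions at $r_\ast$ are fixed.
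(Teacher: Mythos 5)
Your proposal takes a genuinely different route from the paper's: the paper constructs explicit \emph{non-radial} polynomial data (Lemma~\ref{npcl}) near a single point and then glues it to a radial cap by a cutoff, following exactly the strategy of \cite{CW2024,ST2024}. Those references are not radial — you have mischaracterized them — and the present paper adapts their anisotropic construction, in which the free parameters $a$, $b$, $c$ enter independently through the cross and transverse derivatives ($w_1$, $w_{kk1}$, $w_{kk11}$) and are tuned so that one explicit $\alpha$-dependent algebraic expression becomes strictly positive, with a separate construction for each of the three regimes $\alpha\in[0,\tfrac12)$, $\alpha=1$, and $\alpha\in(\tfrac12,1)$. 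A radial profile $h(r)$ removes most of this freedom: at a point $r_\ast e_1$ on the axis all mixed derivatives $w_{1k}$, $w_k$ ($k\ge 2$) vanish by symmetry, and once you impose the marginal condition $h''(r_\ast)=0$ the transverse-Laplacian derivative $w_{kk1}$ is pinned to $-(N-1)h'(r_\ast)/r_\ast^2$, so the single parameter the paper sends to infinity is no longer independently tunable. You have not verified that the remaining radial degrees of freedom ($h'$, $h'''$, $h''''$, $r_\ast$, $h(r_\ast)$) suffice to achieve $\partial_t\lambda_{\mathrm{rad}}(r_\ast,0)>0$ for every $\alpha\ne\tfrac12$; the ``schematic identity'' with a factor $\kappa(\alpha)$ vanishing at $\alpha=\tfrac12$ is asserted, not computed, and the heuristic ``forced by the preservation result of Lemma~\ref{12concave}'' does not establish the sign or simple-root structure of $\kappa$.

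The rescaling argument also does not close the gap as written. If $v_0\mapsto\varepsilon v_0$ then the diffusion contribution $rP\Delta P+|\nabla P|^2$ to $\partial_t w_{11}$ is $O(\varepsilon^2)$, while the reaction contribution $rP\,G(P)$ with $G$ bounded near $0$ is $O(\varepsilon)$, so sending $\varepsilon\to 0$ makes the reaction term \emph{dominate}, the opposite of what you need. The paper instead controls the reaction terms by choosing the value $c=w(0)$ in tandem with the other parameters and invoking \eqref{g0b} directly (e.g.\ for $\alpha=1$ it sets $c:=a^{-2}$ or $c:=(Aa)^{-1/(1-\gamma)}$ so that the $a$-growth of the diffusion term overwhelms the $Ac^{\gamma-1}$ bound on $|G'|+|PG''|$). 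If you want to salvage a radial route you would need to perform the actual radial computation of $\partial_t (h^\alpha)_{rr}$ at $(r_\ast,0)$ under your marginality conditions and exhibit the sign for each $\alpha\ne\tfrac12$, and replace the $\varepsilon$-scaling by a coupled choice of $h(r_\ast)$ and the remaining free derivatives analogous to the paper's choice of $c$.
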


Let $P$ be a solution to the porous medium type reaction-diffusion equation in terms of the pressure \eqref{pe} and be smooth to its support boundary, and let $w:=P^{\alpha}$ for $\alpha\in (0,1]$ and $w=\log P$ for $\alpha=0$. Next, we compute $\partial_t w_{11}$ at the origin $(0,0)$. To deal with the source term, we need to redesign some different initial datum from \cite{CW2024,ST2024}. By means of the equation \eqref{rde}, we have for $\alpha\in (0,1]$,
\begin{equation}\label{partialt11}
\begin{aligned}
    \partial_t w_{11}=&rw^{\frac{1}{\alpha}}w_{kk11}+\frac{2r}{\alpha}w^{\frac{1}{\alpha}-1}w_1w_{kk1}+\frac{r}{\alpha}(\frac{1}{\alpha}-1)w^{\frac{1}{\alpha}-2}w_1^2w_{kk}+\frac{r}{\alpha}w^{\frac{1}{\alpha}-1}w_{11}w_{kk}\\
    &+\frac{1}{\alpha}\big(1+r(1-\alpha)\big)\{(\frac{1}{\alpha}-2)(\frac{1}{\alpha}-1)w^{\frac{1}{\alpha}-3}w_1^2w_k^2\\
    &+(\frac{1}{\alpha}-1)w^{\frac{1}{\alpha}-2}w_{11}w_k^2+4(\frac{1}{\alpha}-1)w^{\frac{1}{\alpha}-3}w_1w_kw_{k1}\\
    &+2w^{\frac{1}{\alpha}-1}w_{1k}^2+2w^{\frac{1}{\alpha}-1}w_kw_{k11}\}+rw_{11}\big(\alpha G(w^{\frac{1}{\alpha}})+w^{\frac{1}{\alpha}}G'(w^{\frac{1}{\alpha}})\big)\\
    &+r|w_1|^2\big[(\frac{1}{\alpha}+1)w^{\frac{1}{\alpha}-1}G'(w^{\frac{1}{\alpha}})+\frac{1}{\alpha}w^{\frac{2}{\alpha}-1}G''(w^{\frac{1}{\alpha}})\big],
\end{aligned}
\end{equation}
and for $\alpha=0$,
\begin{equation}\label{partialt110}
\begin{aligned}
\partial_t w_{11}=&re^ww_{kk11}+2re^{w}w_1w_{kk1}+re^ww_1^2w_{kk}+re^ww_{11}w_{kk}+me^ww_1^2w_k^2\\
&+me^ww_1w_kw_{k1}+2me^ww_{1k}^2+2me^ww_kw_{k11}\\
&+rw_{1}^2\big(G''(e^w)e^{2w}+G'(e^w)e^w\big)+rw_{11} G'(e^w)e^w.
\end{aligned}    
\end{equation}

Let $B_\rho$ be the open ball of radius $\rho>0$ in $\mathbb{R}^N$ centered at the origin. 
\begin{lemma}\label{npcl}
 For each $\alpha\in[0,1]\backslash\{\frac{1}{2}\}$, there exists a $r>0$  and a smooth  positive function $w$ on $\overline{B_r(0)}$ such that 
 \begin{itemize}
    \item[(\expandafter{\romannumeral1})] $w_{11}(0)=0$, $w_{ii}(0)<0$ for $2\leq i\leq N$ and $w_{ij}(0)=0$ otherwise.
    \item[(\expandafter{\romannumeral2})] $\big(D^{2}w\big)<0$ on $\overline{B_{\rho}(0)}\backslash \{0\}$.
    \item[(\expandafter{\romannumeral3})] $\partial_t w_{11}>0$ from the right hand side \eqref{partialt11}-\eqref{partialt110} at the origin $(0,0)$.
\end{itemize}
\end{lemma}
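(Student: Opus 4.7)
The plan is to construct $w$ as a polynomial in $x_1$ and $x'=(x_2,\dots,x_N)$ with coefficients depending on $\alpha$, adapting the ansatz of \cite{CW2024,ST2024} to absorb the extra source-term contributions in \eqref{partialt11}-\eqref{partialt110}. Concretely, I intend to work with
\[w(x)=a_0+\gamma x_1-\tfrac{\beta}{2}|x'|^2-\tfrac{\epsilon}{4}x_1^4+\tfrac{\mu}{4}x_1^2|x'|^2-\tfrac{\nu}{8}|x'|^4+\tau x_1|x'|^2,\]
where $a_0>0$, $\beta,\epsilon,\nu>0$, and $\gamma,\mu,\tau\in\mathbb{R}$ are to be tuned as functions of $\alpha$. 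Condition (i) at the origin is a direct check for any $\beta>0$.

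For (ii), a short computation gives $w_{11}(x)=-3\epsilon x_1^2+(\mu/2)|x'|^2$, so strict negativity on the axis $\{x_1=0,\,x'\neq 0\}$ already forces $\mu\le 0$. The off-diagonal entries $w_{1k}(x)=(\mu x_1+2\tau)x_k$ for $k\ge 2$ then enter a Schur-complement estimate on the Hessian at $(0,x')$; negative definiteness holds provided $|\tau|$ is bounded in terms of $\beta$ and $|\mu|$, while $\epsilon$ and $\nu$ are taken large enough to absorb the remaining cross-terms away from the axis. This delimits a feasible region of parameters consistent with (ii).

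The core step is (iii). After substituting $w_1(0)=\gamma$, $w_k(0)=0$ for $k\ge 2$, and the prescribed Hessian values, a large number of terms in \eqref{partialt11} vanish and the formula collapses to a sum of five contributions: a nonpositive $r\bigl[-6\epsilon+(N-1)\mu\bigr]$ coming from the Laplacian of $w_{11}$ at the origin, a linear-in-$\tau\gamma$ term with factor $1/\alpha$, a nonpositive quadratic $O(\beta\gamma^2)$ contribution with factor $(1-\alpha)/\alpha^2$, an $\alpha$-dependent $\gamma^4$ term whose sign is controlled by $(1-2\alpha)(1-\alpha)$, and a bounded source remainder $\mathcal{R}_G$ involving $G'(a_0)$ and $G''(a_0)$. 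The factor $(1-2\alpha)(1-\alpha)$ is strictly positive on $[0,\tfrac{1}{2})$, strictly negative on $(\tfrac{1}{2},1)$, and vanishes at $\alpha=\tfrac{1}{2}$ and $\alpha=1$. Accordingly: for $\alpha\in(0,\tfrac{1}{2})$, I set $\tau=0$, keep the other parameters fixed so that (ii) remains valid, and let $|\gamma|\to\infty$, so the positive $\gamma^4$ term dominates; for $\alpha\in(\tfrac{1}{2},1]$, the $\gamma^4$ term is unhelpful, and I instead take $\tau\gamma>0$ with $|\tau|$ at the maximum value allowed by (ii) and then scale $|\mu|$ and $\beta$ so that the linear-in-$\tau\gamma$ contribution dominates the remaining negative terms; for $\alpha=0$, I use \eqref{partialt110} in place of \eqref{partialt11}, which produces an additional positive term $me^{a_0}\gamma^4$ handled as in the small-$\alpha$ case.

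The main obstacle is balancing (ii) against (iii): the feasibility window from requiring $D^2w<0$ on a full punctured neighborhood is tight, and precisely at $\alpha=\tfrac{1}{2}$ the special contributions vanish or are too weak relative to the constraints from (ii) — which is exactly consistent with Lemma \ref{12concave}. For $\alpha\neq\tfrac{1}{2}$ the two rescue mechanisms identified above re-open the feasibility window. The source-term remainder $\mathcal{R}_G$ is $O(1)$ under the scaling and does not disturb the leading-order balance; this is the only new technical wrinkle compared to the PME case of \cite{CW2024,ST2024}.
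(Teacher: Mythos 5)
Your plan is the same basic strategy as the paper: build a polynomial $w$ near the origin with a degenerate direction at $0$, check negative-definiteness of $D^2w$ on a punctured ball, and then scale parameters so that the right-hand side of \eqref{partialt11} (or \eqref{partialt110}) is strictly positive at the origin. Where you differ is in unifying the ansatz: the paper works with three separate explicit polynomials, one for $\alpha\in[0,\tfrac12)$ ($w=c+x_1-x_1^4+\sum(ax_1x_i^2-x_i^2-2a^2x_1^2x_i^2)$, sending $a\to\infty$), the same one for $\alpha=1$ with the further scaling $c\sim a^{-2}$ or $c\sim(Aa)^{-1/(1-\gamma)}$ to kill the source term, and a different one for $\alpha\in(\tfrac12,1)$ with coefficients built from $\sqrt{\tfrac32-\alpha}$. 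Your $\{a_0,\gamma,\beta,\epsilon,\mu,\nu,\tau\}$-parametrized ansatz contains all three. For $\alpha\in(0,\tfrac12)$ your mechanism (set $\tau=0$, fix the Hessian parameters, send $\gamma\to\infty$ so the $w_1^2w_k^2$ term $\sim\gamma^4$ dominates) is arguably cleaner than the paper's, since the $\gamma^4$ term is truly quartic in a free parameter whose Hessian contribution is zero, so the constraint (ii) is untouched as $\gamma\to\infty$. Two small slips: the strict inequality $(D^2w)<0$ on the punctured ball forces $\mu<0$ (not $\mu\le 0$), and the source remainder is $O(\gamma^2)$, not $O(1)$, when $\gamma\to\infty$ — harmless here because the $\gamma^4$ term wins anyway, but worth stating.

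The genuine gap is in your treatment of $\alpha\in(\tfrac12,1)$. You say: take $\tau\gamma>0$ with $|\tau|$ at the maximum allowed by (ii), then scale $|\mu|$ and $\beta$ so the $\tau\gamma$ term dominates. This is exactly the step that encodes the threshold $\alpha=\tfrac12$, and it cannot be deferred. Condition (ii) caps $\tau^2$ by roughly $\tfrac18|\mu|\beta$ (from the $2\times 2$ minor in the $(1,k)$-plane at $(0,x'=x_k e_k)$), and the positive contribution $\tfrac{4r(N-1)}{\alpha}a_0^{\frac1\alpha-1}\tau\gamma$ must overcome two negative ones that grow, respectively, like $r(N-1)a_0^{1/\alpha}|\mu|$ and $\tfrac{r(N-1)(1-\alpha)}{\alpha^2}a_0^{\frac1\alpha-2}\gamma^2\beta$. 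By AM--GM, the sum of the negatives is always at least $2\sqrt{pq}\sqrt{|\mu|\beta}$ with $p,q$ the two coefficients, while the positive is at most $\tfrac{4r(N-1)}{\alpha\sqrt8}a_0^{\frac1\alpha-1}\gamma\sqrt{|\mu|\beta}$. Squaring and simplifying, the positive wins only when $\tfrac{2}{\alpha^2}>\tfrac{4(1-\alpha)}{\alpha^2}$, i.e.\ $\alpha>\tfrac12$ — exactly the paper's explicit outcome $r(N-1)\tfrac{2\alpha-1}{1-\alpha}>0$, obtained there by the particular choice $w_1(0)=\tfrac{\alpha(\frac32-\alpha)^{1/2}}{b(1-\alpha)}$, $\tau=b(\tfrac32-\alpha)^{1/2}$, $\beta=2b^2$. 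Without this constant-chasing your ``scale $|\mu|,\beta$ so the $\tau\gamma$ term dominates'' is not yet a proof: it is precisely the numerical balance that makes $\tfrac12$ sharp, and it must be carried out. For $\alpha=1$ the situation simplifies (both the $\gamma^2\beta$-term coefficient $(\tfrac1\alpha-1)$ and the $\gamma^4$-term coefficient $(\tfrac1\alpha-2)(\tfrac1\alpha-1)$ vanish), so fixing $|\mu|$ and sending $\beta\to\infty$, hence $\tau\to\infty$, is enough — this avoids the paper's $c\to 0$ rescaling and is simpler, but you should say so explicitly rather than absorbing $\alpha=1$ into the $(\tfrac12,1)$ case.
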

\begin{proof}
The proof is divided into two cases. 

\noindent\textbf{\emph{Case 1, $\alpha\in[0,\frac{1}{2})$.}} Define 
\begin{equation}\label{w}w=c+x_1-x_1^4+\sum\limits_{i=2}^{N}(ax_1x_i^2-x_i^2-2a^2x_1^2x_i^2\big),\end{equation}
where the constants $a,c>0$ are determined later. By direct computations, we get 
\begin{align}
 w_{11}=&-12x_1^2-\sum\limits_{i=2}^{N}4a^2x_i^2,\notag\\
 w_{1i}=&w_{i1}=2ax_i-8a^2x_1x_i\quad \text{ for }i=2,...,N,\notag\\
 w_{ii}=&2a x_1-2-4x_1^2\quad \text{ for }i=2,...,N,\notag\\
 w_{ij}=&0\quad \text{for }i,j=2,...,N,\notag
\end{align}
Then, (\expandafter{\romannumeral1}) naturally holds.

\vspace{2mm}

For (\expandafter{\romannumeral2}) on the Hessian matrix $(D^2w)$, we calculate directly determinant of the $j$-$th$ leading principle minor $(D^2w)_j$ for $j=2,...,N$, that is   
\[\text{determinant of } (D^2w)_j=(-2)^{j}(6x_1^2+2a^2\sum\limits_{i=2}^Nx_i^2-a^2\sum\limits_{i=2}^{j}x_i^2)+\text{higher order terms}.\]
Due to
\[6x_1^2+2a^2\sum\limits_{i=2}^Nx_i^2-a^2\sum\limits_{i=2}^{j}x_i^2>0 \text{ for }x\neq0, \]
it holds by taking small $\rho>0$ that 
\[(-1)^j\big(\text{determinant of } (D^2w)_j\big)>0,\quad x\in \overline{B_{\rho}}\backslash\{0\}.\]
Hence, $(D^2w)$ is a strictly negative-definite matrix in $\overline{B_{\rho}}\backslash\{0\}$, so (\expandafter{\romannumeral2}) holds.

\vspace{2mm}

We turn to (\expandafter{\romannumeral3}), at the origin $(0,0)$, it follows from direct calculations that  
\begin{align}
&w=c,         &&w_{1}=1,         &&&w_{kk}=-2,\notag\\
&w_{kk1}=2a,  &&w_{1111}=-24,      &&&w_{kk11}=-8a^2.\notag
\end{align}
Then,  the right hand side of \eqref{partialt11} at the origin $(0,0)$ is 
\begin{align}R(a,c)=&-rc\big(24+8(N-1)\big)a^2+\frac{2r}{\alpha}c^{\frac{1}{\alpha}-1}a+\frac{1}{\alpha}\big(1+r(1-\alpha)\big)(\frac{1}{\alpha}-2)(\frac{1}{\alpha}-1)a^4\notag\\
&+r\big[(\frac{1}{\alpha}+1)c^{\frac{1}{\alpha}-1}G'(c^{\frac{1}{\alpha}})+\frac{1}{\alpha}c^{\frac{2}{\alpha}-1}G''(c^{\frac{1}{\alpha}})\big].\notag\end{align}
For $\alpha\in (0,\frac{1}{2})$, it holds by $a\gg1$, and $|G'(c^{\frac{1}{\alpha}})|+|G''(c^{\frac{1}{\alpha}})|<\infty$ for some $c>0$ that 
\begin{equation}\label{012}
\partial_t w_{11}(0,0)=R(a,c)>0.
\end{equation}

 In the case of $\alpha=0$, let $|G'(e^c)|+|G''(e^c)|<\infty$ for some $c>0$,  the right hand side of \eqref{partialt110} with sufficiently large $a>0$ yields
\begin{equation}\label{0}
\begin{aligned}
\partial_t w_{11}(0,0)=&-r\big(24+8(N-1)\big)\partial_t w_{11}(0,0)+4r(N-1)a-2r(N-1)a^2+ma^4\\
&+ra^2\big(G''(e^c)e^{2c}+G'(e^c)e^c\big)>0.
\end{aligned}
 \end{equation}
Hence,  (\expandafter{\romannumeral3}) is justified based on the above computations \eqref{012}-\eqref{0} for $\alpha\in[0,\frac{1}{2})$.

\noindent\textbf{\emph{Case 2, $\alpha=1$.}} Still for the function $w$  given by \eqref{w} with $\alpha=1$, (\expandafter{\romannumeral1}) and (\expandafter{\romannumeral2}) naturally hold as \textbf{\emph{Case 1.}}
By means of he condition \eqref{g0b} on $G$, we have 
\[\begin{aligned}
\partial_t w_{11}(0,0)=&2a+[2G'(c)+cG''(c)]-ca^2\big(24+8(N-1)\big)\\
\geq &2a-ca^2\big(24+8(N-1)\big)-Ac^{\gamma-1}.
\end{aligned}\]
 For $\gamma\geq 1$, let $c:=a^{-2}$, we take large $a>0$ such that 
\begin{equation}\label{ptw111}\partial_t w_{11}(0,0)\geq 2a-24-8(N-1)-Aa^{-2(\gamma-1)}>0.\end{equation}
For $\gamma\in(0,1)$, it holds by setting $c=:(Aa)^{\frac{-1}{1-\gamma}}$ that 
\begin{equation}\label{ptw112}\partial_t w_{11}(0,0)\geq2a-a^{2-\frac{1}{1-\gamma}}(A)^{\frac{-2}{1-\gamma}}\big(24+8(N-1)\big)-a>0 \end{equation}
for large $a>0$. (\expandafter{\romannumeral3}) is valid for $\alpha=1$ on account of \eqref{ptw111}-\eqref{ptw112}.

\vspace{2mm}

\noindent\textbf{\emph{Case 3, $\alpha\in(\frac{1}{2},1)$.}}     Redefine 
\[w=c+\frac{\alpha(\frac{3}{2}-\alpha)^{\frac{1}{2}}}{b(1-\alpha)}x_1-\frac{x_1^4}{12b^2}+\sum\limits_{i=2}^{N}\big(-b^2x_i^2+b(\frac{3}{2}-\alpha)^{\frac{1}{2}}x_1x_i^2-x_1^2x_i^2\big),\]
where $b>0$ is to be determined later. The each component of the Hessian matrix $(D^2w)$ is 
\begin{align}
w_{11}&=-\frac{x_1^2}{b^2}-2\sum\limits_{i=2}^{N}x_i^2\notag\\
w_{1i}&=w_{i1}=2b(\frac{3}{2}-\alpha)^{\frac{1}{2}}x_i-4x_1x_i\quad\text{ for }i=2,...,N,\notag\\
w_{ii}&=-2b^2+2b(\frac{3}{2}-\alpha)^{\frac{1}{2}}x_1-2x_1^2\quad\text{ for }i=2,...,N,\notag\\
w_{ij}&=0\quad \text{for }i,j=2,...,N,\notag
\end{align}
which verifies (\expandafter{\romannumeral1}) by taking the value at the origin.

\vspace{2mm}

Similar to the computations of \textbf{\emph{Case 1}}, the determinant of the $j$-th leading principal minor is given by 
\[\text{determinant of } (D^2w)_j=(-1)^{j}(2b^2)^{j-1}(\frac{x_1^2}{b^2}+2\sum\limits_{i=2}^Nx_i^2-2(\frac{3}{2}-\alpha)\sum\limits_{i=2}^{j}x_i^2)+\text{higher order terms}.\]
Since 
\[\frac{x_1^2}{b^2}+2\sum\limits_{i=2}^Nx_i^2-2(\frac{3}{2}-\alpha)\sum\limits_{i=2}^{j}x_i^2>0,\quad\alpha\in (\frac{1}{2},1),\quad x\neq0,\]
it concludes by taking small $\rho>0$ that 
\[(-1)^j\big(\text{determinant of } (D^2w)_j\big)>0,\quad x\neq0,\ |x|\leq \rho,\]
which proves (\expandafter{\romannumeral2}).

\vspace{2mm}

We are going to verify (\expandafter{\romannumeral3}).  Due to the explicit form of $w$,  it is direct to obtain  
\begin{align}
&w=c,         &&w_{1}=\frac{\alpha(\frac{3}{2}-\alpha)^{\frac{1}{2}}}{b(1-\alpha)},         &&&w_{kk}=-2b^2,\notag\\
&w_{kk1}=2b(\frac{3}{2}-\alpha)^{\frac{1}{2}},  &&w_{1111}=-\frac{2}{b^2},     &&&\ \ \ w_{kk11}=-4.\notag
\end{align}
Substituting these values into \eqref{partialt11} at the origin, for $|G'(c)|+|G''(c)|<\infty$ with some $c>0$, we attain
\begin{align}
\partial_t w_{11}=&r(-\frac{2}{b^2}-4(N-1))+2r\frac{\frac{3}{2}-\alpha}{1-\alpha}(N-1)\notag+\frac{1}{\alpha}\big(1+r(1-\alpha)\big)(\frac{1}{\alpha}-2)(\frac{1}{\alpha}-1)\big(\frac{\alpha^2(\frac{3}{2}-\alpha)}{b^2(1-\alpha)^2}\big)^{2}\notag\\
&+r\frac{\alpha^2(\frac{3}{2}-\alpha)}{b^2(1-\alpha)^2}\big[(\frac{1}{\alpha}+1)G'(c)+\frac{1}{\alpha}G''(c)\big]\notag\\
=&\frac{\mathcal{A}_{\alpha,m}}{b^2}-\frac{\mathcal{B}_{\alpha,m}}{b^4}+r(N-1)\big(\frac{2\alpha-1}{1-\alpha}\big)\notag
\end{align}
with \begin{align}\mathcal{A}_{\alpha,m}&:=-2r+r\frac{\alpha^2(\frac{3}{2}-\alpha)}{(1-\alpha)^2}\big[(\frac{1}{\alpha}+1)G'(c)+\frac{1}{\alpha}G''(c)\big],\notag\\
\mathcal{B}_{\alpha,m}&:=\frac{1}{\alpha}\big(1+r(1-\alpha)\big)(2-\frac{1}{\alpha})(\frac{1}{\alpha}-1)\big(\frac{\alpha^2(\frac{3}{2}-\alpha)}{(1-\alpha)^2}\big)^{2}.\notag\end{align}
Therefore, due to $r(N-1)\big(\frac{2\alpha-1}{1-\alpha})>0$, 
choosing large $b>0$ means that  (\expandafter{\romannumeral3}) holds.
\end{proof}

Inspired by \cite{CW2024,ST2024}, we are going to prove the concave non-preservation of  Theorem~\ref{npc} via constructing the counterexample. To be more precise, we construce $\alpha$-concave  initial pressure for $\alpha\in[0,1]\backslash\{\frac{1}{2}\}$, and then the $\alpha$-concavity of the pressure loses instantaneously.

\begin{proof}[\textbf{\emph{\underline{Proof of Theorem~\ref{npc}}}}]In Lemma~\ref{npcl}, we get a concave smooth positive function $w$ on $\overline{B_{\rho}}$. To begin with, we can smoothly extend $w$ on $\overline{B_\rho}$ to a smooth concave positive function $\tilde{w}$ on $B$. %We can set $P_0^{\alpha}=\tilde{w}$. 
To this end, we first define a smooth cutoff function $\Phi:\overline{B}\to[0,1]$
\begin{equation*}
   \Phi:=\begin{cases}1\quad  &x\in\overline{B_{\frac{\rho}{2}}},\\
   0\quad &x\in\overline{B}\backslash B_{\frac{3\rho}{4}}.
   \end{cases}
\end{equation*}
There exists a constant $C>0$ such that 
\begin{equation}\label{dpsi}
    |D\Phi|+|D^2\Phi|\leq C,\quad\text{in }\overline{B}.
\end{equation}
In addition, we define a radial concave function $F:\overline{B}\to[0,\infty)$ as $F(x)=f(|x|)$, $f(r):[0,1]\to [0,\infty)$ is smooth concave function. 

For the case of $\alpha\in(0,1]$, $f$ satisfies
\begin{equation*}
    f(r):=\begin{cases}1\quad &0\leq r\leq \frac{\rho}{4},\\
        1-r^2 \quad &\frac{\rho}{2}\leq r\leq 1.
    \end{cases}
\end{equation*}
One can immediately check that  $F(x):=f(|x|)$ is smoothly concave on $B$ and vanishes on $\partial B$. %On $[\frac{\rho}{2},1]$, we have \[f'\leq -\rho,\quad f''\leq -2.\]
The direct computations yield 
\begin{equation*}
    \big( D^2F \big)\leq -2\textbf{Id},\quad\text{in }B\backslash B_{\frac{\rho}{2}}
\end{equation*}
where $\textbf{Id}$ is the N-order unit matrix. 
In this way, we can define the expected initial data $\tilde{w}:B\to[0,\infty)$ by 
\[\tilde{w}=A F+\Phi w,\]
where the constant $A>0$ is determined later. Then, the Hessian matrix of $\tilde{w}$ can be given by 
\begin{equation*}
\begin{aligned}
    \big(D^2\tilde{w}\big)=A\big(D^2 F\big)+\Psi \big(D^2w\big)+w(D^2\Psi)+2D\Psi (D w)^\mathrm{T}.
\end{aligned}
\end{equation*}
Since $\Psi$ is identical to 1 in $B_{\frac{\rho}{2}}$, it holds 
\[\big(D^2\tilde{w}\big)=A\big(D^2 F\big)+\Psi \big(D^2w\big)<0,\quad \text{in }B_{\frac{\rho}{2}}\backslash\{0\}.\]
Furthermore, on $\overline{B}\backslash B_{\frac{\rho}{2}}$,   combining with \eqref{dpsi} yields
\[\big(D^2\tilde{w}\big)\leq (C-2A)\textbf{Id}<0,\]
by choosing sufficiently large $A>0$. 

Let $\lambda_1(x,t)$ be the largest eigenvalue of the matrix $\big(D^2 P^\alpha \big)$ and be smooth in the neighborhood of origin and $\big(D^2P^2\big)=\big(D^2 w\big)$ at the origin. From Lemma \ref{npcl}, we know that $\lambda_1(0,0)=0$. All spatial derivatives of $P^\alpha$ in $(x,t)=(0,0)$ coincide with those of $w$ at the origin, then it follows from (\expandafter{\romannumeral3}) of Lemma \ref{npcl} that \[\partial_t\lambda_1(0,0)=\partial_t(P^\alpha)_{11}(0,0)=\partial_t w_{11}(0,0)>0.\] Therefore, there are exits $\delta>0$ such that
\[\lambda_1(0,t)>0,\quad t\in(0,\delta).\]
This  means that  $P^\alpha$ for $\alpha\in (0,1]\backslash\{\frac{1}{2}\}$ is not concave in the time interval $(0,\delta)$ .

Finally, for $\alpha=0$, define 
\begin{equation*}
    f(r)=\begin{cases}\log(1-\frac{\rho}{2})+\frac{1}{4},\quad&0\leq r\leq \frac{\rho}{4},\\
    \log(1-r),\quad &\frac{\rho}{2}\leq r\leq 1.
    \end{cases}
\end{equation*}
Let $F(x):=f(|x|)$ on $B$ and $\tilde{w}:=AF+\Psi w$. The similar calculations to the previous case $\alpha\in (0,1]\backslash\{\frac{1}{2}\}$ yield
\[\big(D^2\tilde{w}\big)\leq 0.\]
Using Lemma \ref{npcl} derives
\[\partial_t \lambda_1(0,0)>0 \text{ and }\lambda_1(0,0)=0.\]
The rest is same as in the case of $\alpha\in(0,1]\backslash\{\frac{1}{2}\}$.
\end{proof} 
\subsection{Non-degeneracy estimate}
Under the $\frac{1}{2}$-concave results (Lemma \ref{12concave}), we can estimate the boundary behavior of the pressure. In the following, we show that the pressure $P$ degenerates linearly to its spatial support boundary.   We first give the upper bound estimate of the pressure gradient. Compared with previous work \cite{DHL2001}, to overcome the difficulty caused by non-monotonicity of the source term $uG(P)$, we introduce the new variable \[ v:=e^{-G(0)t}u.\] 
\begin{lemma}\label{ue}
    If the initial pressure $P_0$ satisfies \eqref{ini}-\eqref{ini2} and the second estimate of \eqref{ini3}, and assume that the pressure $P$ for the porous medium type reaction-diffusion equation \eqref{rde}  is smooth to it support boundary with the condition \eqref{concavityc} on $G$. Then, it holds for all $r:=m-1>0$ that 
\[|\nabla P|\leq Ce^{rG(0)t},\ t\geq 0.\]
\end{lemma}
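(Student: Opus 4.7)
The plan is to prove the gradient estimate by applying a Bernstein-type maximum principle to the weighted quantity
\[
\Phi(x,t) := e^{-2rG(0)t}|\nabla P|^2,
\]
whose $L^\infty$ bound in space is equivalent to the claim. The weight is chosen precisely to absorb the exponential inflation caused by the reaction term $rPG(P)$; it is the squared gradient of the effective pressure $Q:=e^{-rG(0)t}P$ associated to the change of variable $v:=e^{-G(0)t}u$ suggested at the start of the subsection. The bound $\Phi(\cdot,0)\leq C^2$ is provided by the initial assumption $|\nabla P_0|\leq C$ in \eqref{ini3}.

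First I would differentiate the pressure equation \eqref{pe} in $x_i$, multiply by $P_i$, and sum, which yields
\[
\partial_t W = rP\Delta W + 2\nabla P\cdot\nabla W + 2rW\Delta P - 2rP|D^2P|^2 + 2rWG(P) + 2rPWG'(P),
\]
for $W:=|\nabla P|^2$. Multiplying by $e^{-2rG(0)t}$ gives the analogous equation for $\Phi$, in which the zero-order contribution collapses into $\bigl[2r(G(P)-G(0))+2rPG'(P)\bigr]\Phi$, which is non-positive since $G$ is decreasing and $P\geq 0$.

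The key step is the analysis at an interior spatial maximum of $\Phi(\cdot,t)$. There $\nabla\Phi=0$ forces $(D^2P)\nabla P=0$, so $\nabla P$ lies in the kernel of the Hessian. Invoking the root concavity of Lemma~\ref{12concave}, the matrix $P_{ij}-\tfrac{P_iP_j}{2P}$ is negative semi-definite, and together with $(D^2P)\nabla P=0$ this forces the Hessian eigenvalue in the direction of $\nabla P$ to vanish while all eigenvalues in the orthogonal complement are non-positive. Hence $\Delta P\leq 0$ at such a point, which renders $2rW\Delta P-2rP|D^2P|^2\leq 0$. Together with $\Delta\Phi\leq 0$ and the already non-positive zero-order terms, this gives $\partial_t\Phi\leq 0$ at any interior spatial maximum, so the interior supremum cannot grow.

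The main obstacle is the behaviour on the free boundary $\partial\{P(\cdot,t)>0\}$: the equation for $\Phi$ degenerates there and the domain is time-dependent. I would treat this exactly as in Case 2 of the proof of Lemma~\ref{12concave}: using $P=0$ and $\partial_tP=|\nabla P|^2$ on $\{P=0\}$ (the Darcy law coming from \eqref{pe}), I would select a path $x(t)\in\partial\{P(\cdot,t)>0\}$ along which $\dot{x}=-\nabla P$, track $\Phi(x(t),t)$ as a scalar function of $t$, and use the smoothness of $P$ up to the boundary guaranteed by \eqref{ini} to show that its time derivative at a running boundary maximum is still non-positive. Combining the interior and boundary analyses, the maximum principle gives $\Phi(\cdot,t)\leq\Phi(\cdot,0)\leq C^2$, which is equivalent to the claimed bound $|\nabla P|\leq Ce^{rG(0)t}$.
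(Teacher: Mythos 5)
Your proposal is correct and follows essentially the same strategy as the paper: the paper likewise rescales to the effective pressure $\tilde{P}=e^{-rG(0)t}P$ (equivalently, works with $X=\tfrac12|\nabla\tilde P|^2-\epsilon t$, which is your $\Phi$ up to the $\epsilon$-regularization), applies the Bernstein maximum principle, and uses the root concavity of Lemma~\ref{12concave} to force $\Delta\tilde P\le 0$ at the critical point so that all remaining terms are sign-definite. Your eigenvalue reformulation of the concavity step and your Darcy-law boundary path are only cosmetically different from the paper's coordinate-rotation and inward-derivative argument at the free boundary.
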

\begin{proof} 
The  new variable $$v:=e^{-G(0)t}u$$ solves
\[\partial_t v=e^{rG(0)t}\Delta v^m+v[G(e^{rG(0)t}\tilde{P})-G(0)],\]
and the corresponding pressure $\tilde{P}:=e^{rG(0)t}v^{m-1}$ satisfies
\[\partial_t\tilde{P}=e^{rG(0)t}[r\tilde{P}\Delta \tilde{P}+|\nabla \tilde{P}|^2+r\tilde{P}(G(e^{rG(0)t}\tilde{P})-G(0))].\]

We are going to use the comparison principle to achieve our goal.  To this end, we assume 
\[X=\frac{\tilde{P}_k^2}{2}-\epsilon t,\quad t\geq0.\]
Since
\[
\begin{aligned}X_t=&\tilde{P}_k\tilde{P}_{kt}-\epsilon
\\
=&e^{rG(0)t}\tilde{P}_k[r\tilde{P}_k\tilde{P}_{ii}+r\tilde{P}\tilde{P}_{kii}+2\tilde{P}_{ki}\tilde{P}_i]\\
&+re^{rG(0)t}\tilde{P}_k^2\big(G(e^{rG(0)t}\tilde{P})-G(0)+e^{rG(0)t}\tilde{P}G'(e^{rG(0)t}\tilde{P})\big)-\epsilon
\end{aligned}\]
and 
\[X_k=\tilde{P}_i\tilde{P}_{ik},\ X_{kk}=\tilde{P}_i\tilde{P}_{ikk}+\tilde{P}_{ik}^2,\]
we have
\[
\begin{aligned}X_t=&e^{rG(0)t}[r\tilde{P} X_{kk}-r\tilde{P}\tilde{P}_{ik}^2+r\tilde{P}_i^2\tilde{P}_{kk}+2\tilde{P}_kX_k]\\
&+re^{rG(0)t}|\tilde{P}_k|^2\underbrace{\big(G(e^{rG(0)t}\tilde{P})-G(0)+e^{rG(0)t}\tilde{P}G'(e^{rG(0)t}\tilde{P})\big)}_{\leq0}-\epsilon.\end{aligned}\]
\noindent\textbf{\emph{Case 1, interior estimate.}} At an interior maximum point $(x_0,t_0)$ of $X$, we derive
\begin{equation*}
\tilde{P}X_{kk}\leq 0
\end{equation*}
and 
\begin{equation*}
X_k=0.
\end{equation*}
Hence, we obtain 
\begin{equation}\label{pkk}X_t\leq e^{rG(0)t}\tilde{P}_i^2\tilde{P}_{kk}-\epsilon. \end{equation}
It is enough to show 
\begin{equation}\label{pkk1}\tilde{P}_{kk}\leq 0,\quad k=1,...,N.
\end{equation}
By rotating the coordinates, we can assume that \begin{equation*}\label{rc1}\tilde{P}_N\geq 0,\quad \tilde{P}_i=0,\ i=1,2...,N-1\end{equation*}at the point $(x_0,t_0)$.  Next, $\tilde{P}_N$ is divided into two cases as $\tilde{P}_N>0$ and $\tilde{P}_N=0$.

\vspace{2mm}

For the case of $\tilde{P}_N>0$ at $(x_0,t_0)$, the direct computation 
\[X_N=\tilde{P}_N\tilde{P}_{NN}=0\text{ at }(x_0,t_0)\] implies \[\tilde{P}_{NN}=0\text{ at }(x_0,t_0).\]
The $\frac{1}{2}$-concavity of $\tilde{P}=e^{-rG(0)t}P$ (Theorem~\ref{concavityrde}) yields 
\begin{equation}\label{1/2conca}\tilde{P}\tilde{P}_{ij}V^iV^j\leq \frac{1}{2}\tilde{P}_i\tilde{P}_jV^iV^j.\end{equation}
Hence, we further get 
 \[\tilde{P}_{ii}\leq 0,\quad 1\leq i\leq N-1 \text{ at }(x_0,t_0)\]
 by taking $V^i=\delta_{ik}$. 

\vspace{2mm}

For the case of $\tilde{P}_N=0$ at $(x_0,t_0)$, the $\frac{1}{2}$-concavity \eqref{1/2conca} yields 
\[\tilde{P}_{ii}\leq \frac{|\partial_i \tilde{P}|^2}{2\tilde{P}}=0,\quad i=1,...,N \text{ at }(x_0,t_0).\]

Finally, it concludes that \eqref{pkk} holds by means of the previously verified \eqref{pkk1}.
 
 \vspace{1mm}
 
\noindent\textbf{\emph{Case 2, boundary estimate}}    Assumde that its maximum occurs at a support boundary point $(x_0,t_0)$ and  
\[\tilde{P}_N>0,\text{ and }\tilde{P}_i=0,\ i=1,...,N-1\text{ at }(x_0,t_0).\]
Due to $\tilde{P}(x_0,t_0)=0$, it holds 
\[X_t\leq e^{(m-1)G(0)t}[\tilde{P}_N^2\tilde{P}_{kk}+2r\tilde{P}_NX_N]-\epsilon\text{ at }(x_0,t_0).\]
On account of $\tilde{P}_N>0$ at the maximum point of $X$, we have 
\[X_N\leq 0\text{ at }(x_0,t_0).\]
It further follows
\[X_N\tilde{P}_N\leq 0\text{ at }(x_0,t_0).\]
We are going to show that $\tilde{P}_{kk}\leq 0,\ k=1,...,N-1.$
Since 
\[X_N=\tilde{P}_N\tilde{P}_{NN}\leq 0,\]
we get 
\[\tilde{P}_{NN}\leq 0\ \text{at }(x_0,t_0).\]
Similar to the case of interior estimate, it holds by means of  $\frac{1}{2}$-concavity that 
\[\tilde{P}_{ii}\leq 0,\ i=1,...,N-1\ \text{at }(x_0,t_0).\]

In conclusion, combining \textbf{\emph{Case 1, interior estimate}}  and \textbf{\emph{Case 2, boundary estimate}}, it holds at the maximum point that 
\[X_t\leq -\epsilon,\]
which is contradicted with $X_t\geq0$ and  yields that $X$ attain its maximum at the initial pressure. Hence, we obtain
\[|\nabla P|\leq e^{rG(0)t}C,\quad t\geq0. \]
\end{proof}
We are going to verify the Aronson-B\'enilan estimate, which shows the semi-harmonic property of the pressure.

\begin{lemma}[Aronson-B{\'e}nilan estimate]\label{lAB}
Under the initial assumptions \eqref{ini}-\eqref{ini2} and the condition \eqref{abc} on $G$, then, for the pressure of the porous medium type reaction-diffusion equation \eqref{rde}, it holds for all $r:=m-1>0$ that 
\begin{equation}\label{ab2}
\Delta P+G(P)\geq -\frac{K}{1+rKt}\quad \text{in }\mathcal{D}'(\mathbb{R}^n\times \mathbb{R}_+).
\end{equation}
\end{lemma}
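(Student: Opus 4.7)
The plan is to set $w:=\Delta P+G(P)$, derive a parabolic differential inequality of the form $\partial_t w\geq rP\,\Delta w+(\text{drift})\cdot\nabla w+rw^2$ wherever $w\leq 0$, and compare it with the scalar ODE $\dot\phi=r\phi^2$, $\phi(0)=-K$. Its explicit solution $\phi(t)=-K/(1+rKt)$ is exactly the right-hand side of \eqref{ab2}. This is the Aronson--B\'enilan template, here adapted to absorb the source term $rPG(P)$ through hypothesis \eqref{abc}.

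The key step is the computation. Rewriting \eqref{pe} as $\partial_t P=rPw+|\nabla P|^2$ exposes $w$ as the natural quantity. Differentiating, using $\partial_t w=\Delta(\partial_t P)+G'(P)\partial_t P$, and systematically replacing $\nabla\Delta P=\nabla w-G'(P)\nabla P$ together with $\Delta P=w-G(P)$, a direct calculation on the interior of $\{P>0\}$ produces
\[
\partial_t w=rP\,\Delta w+(2r+2)\nabla P\cdot\nabla w+rw^2-rw[G(P)-PG'(P)]+2|D^2P|^2-G'(P)|\nabla P|^2.
\]
The quadratic $rw^2$ is the engine of the algebraic decay, and every remaining inhomogeneous term has a favorable sign once $w\leq 0$. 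Indeed $2|D^2P|^2\geq 0$, and since $G$ is decreasing $-G'(P)|\nabla P|^2\geq 0$; hypothesis \eqref{abc} forces $-rw[G(P)-PG'(P)]\geq 0$ at points where $w\leq 0$. The clean differential inequality $\partial_t w\geq rP\,\Delta w+(2r+2)\nabla P\cdot\nabla w+rw^2$ then holds wherever $w\leq 0$.

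The comparison step concludes the pointwise bound: applying the parabolic maximum principle to $w-\phi$ (after the standard perturbation $w\mapsto w+\varepsilon(1+t)$ to ensure the infimum is reached cleanly and then sending $\varepsilon\to 0$) gives $w\geq\phi=-K/(1+rKt)$ on the interior of $\{P>0\}$; outside the support $w=G(0)>0$ and the bound is trivially satisfied. I expect the delicate point to be the degeneration of the operator $\partial_t-rP\,\Delta-(2r+2)\nabla P\cdot\nabla$ on $\partial\{P>0\}$, which must be handled carefully when running the maximum principle. The smoothness of $P$ up to the free boundary (assumed in the statement) and the already established Lipschitz control of $\nabla P$ from Lemma~\ref{ue} keep the drift bounded, so the comparison works on the closed support. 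Equivalently one regularizes $P_0$ into strictly positive data with $\Delta P_0^{\varepsilon}+G(P_0^{\varepsilon})\geq-K-o(1)$, runs the argument on the uniformly parabolic regularized equation, and passes to the limit in $\mathcal{D}'$; the linear degeneracy of $P$ contributes only a nonnegative surface measure to $\Delta P$ at the free boundary, so the distributional form of \eqref{ab2} follows at once from the interior pointwise bound.
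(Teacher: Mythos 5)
Your proposal is correct and follows essentially the same route as the paper: both set $\omega=\Delta P+G(P)$, derive the same evolution identity with drift coefficient $2m=2r+2$, discard the nonnegative remainders $2|D^2P|^2$ and $-G'(P)|\nabla P|^2$, invoke \eqref{abc} to make $-r\omega(G-PG')$ favorable on $\{\omega\le 0\}$, and compare with the ODE $\dot\phi=r\phi^2$, $\phi(0)=-K$. The only difference is the final comparison device: the paper multiplies by $\mathrm{sign}(|\omega|_-)$ and uses Kato's inequality to obtain the distributional bound directly, whereas you run a classical maximum principle on $w-\phi$ together with a perturbation and boundary regularization (also making the nonessential use of Lemma~\ref{ue} to control the drift).
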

\begin{proof}Let $\Delta \times \eqref{pe}+G'(P)\times\eqref{pe}$ and set $\omega:=\Delta P+G(P)$, then we have
\begin{equation}\label{ab1}
\begin{aligned}
\partial_t \omega=&rP\Delta \omega+2m\nabla P\cdot\nabla \omega+r\omega^2-(G(P)-PG'(P))r\omega\\&-G'(P)|\nabla P|^2+2\nabla^2 P:\nabla^2 P\\
\geq&rP\Delta \omega+2m\nabla P\cdot\nabla \omega+r\omega^2-(G(P)-PG'(P))r\omega.
\end{aligned}
\end{equation}
Set $|f(x)|_-:=-\min\{f(x),0\}$ and  multiplying~\eqref{ab1} by $sign(|\omega|_-)$,  thanks to Kato's inequality,  it holds in the sense of distribution that 
\begin{equation*}
\begin{aligned}
\partial_t |\omega|_-\leq& rP\Delta |\omega|_-+2m\nabla P\cdot\nabla |\omega|_--r|\omega|_-^2-(G(P)-PG'(P))r|\omega|_-\\
\leq & rP\Delta |\omega|_-+2m\nabla P\cdot\nabla |\omega|_--r|\omega|_-^2.
\end{aligned}
\end{equation*}

Since $f(t)=\frac{K}{1+rKt}$ with $f(0)=K$ solves $\frac{d}{dt} f=-rf^2$,  it concludes by the comparison principle that $|\omega(\cdot,t)|_-\leq f(t)$  in the sense of distribution, and the result \eqref{ab2} holds.
\end{proof}

\begin{lemma}\label{alphat}
Assume that the initial pressure $P_0$ satisfies the assumptions \eqref{ini}-\eqref{ini3}
and $G$ has the properties \eqref{concavityc} and \eqref{abc}-\eqref{abc1}. If the pressure $P$ is smooth to its support boundary, then, the lower bound estimate holds
\[\big[1+(t+\frac{2}{Kr+2})rG(P)\big]P+(t+\frac{2}{Kr+2})(1+\frac{r}{2})|\nabla P|^2\geq \frac{c}{Kr+2}e^{-rLt}\quad \text{on }\{P>0\}.\]
\end{lemma}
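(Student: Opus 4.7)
The plan is to establish the lower bound by a parabolic maximum principle applied to the auxiliary quantity
\[W(x,t) := \big[1 + \tau(t)\, r G(P)\big] P + \tau(t)\Big(1 + \frac{r}{2}\Big)|\nabla P|^2,\qquad \tau(t):=t+\frac{2}{Kr+2},\]
together with the integrating factor $e^{rLt}$. The target inequality is exactly $W \geq \frac{c}{Kr+2}\,e^{-rLt}$ on $\{P>0\}$, so it suffices to show that $e^{rLt}W$ cannot drop below its initial minimum either in the interior of the support or along the moving boundary. The three main ingredients are (i) the sharp $\frac{1}{2}$-concavity from Lemma \ref{12concave} (which furnishes $\Delta P \leq \frac{|\nabla P|^2}{2P}$ and the usual $|D^2P|^2 \geq \frac{1}{N}(\Delta P)^2$ bound), (ii) the Aronson--B\'enilan estimate of Lemma \ref{lAB} (providing $\Delta P + G(P) \geq -\frac{K}{1+rKt}\geq -K$), and (iii) the structural hypotheses \eqref{abc}--\eqref{abc1} that calibrate the decay rate $rL$.

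The initial check at $t=0$ reduces to $(Kr+2)P_0+(r+2)|\nabla P_0|^2 + 2rG(P_0)P_0\geq c$. Using \eqref{ini3} in the form $P_0+|\nabla P_0|^2\geq c$ and controlling the sign of $rG(P_0)P_0$ through $P_0\leq P_H$ and \eqref{abc} (the product $PG(P)$ can be absorbed by the coefficient of $P_0$), one gets $W(\cdot,0)\geq \frac{c}{Kr+2}$. The evolution of $W$ is computed from the pressure equation \eqref{pe}: differentiating $|\nabla P|^2$ and commuting the Laplacian with the gradient yields a standard identity
\[\partial_t|\nabla P|^2 = rP\,\Delta|\nabla P|^2 + 2\nabla P\cdot\nabla|\nabla P|^2 -2rP|D^2P|^2+2r|\nabla P|^2\Delta P+2r|\nabla P|^2\bigl(G(P)+PG'(P)\bigr),\]
while $\partial_t(G(P)P)$ produces $G(P)P_t + PG'(P)P_t$. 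Grouping and substituting the equation for $P_t$, $W$ satisfies an inequality of the form
\[\partial_t W \geq rP\Delta W + 2\nabla P\cdot\nabla W + \mathcal{R}(P,\nabla P,D^2P,\tau,\tau'),\]
and the choice of the coefficients $1+\tau rG(P)$ and $\tau(1+r/2)$ is dictated by the desire to have $\mathcal R$ be pointwise $\geq -rL W$. Here $\tau'=1$ combined with the Aronson--B\'enilan constant $K$ yields precisely the factor $\tfrac{2}{Kr+2}$ in $\tau_0$, while the bound $-PG'(P)\leq L-K$ from \eqref{abc1} provides the exponential rate $rL$.

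Once the pointwise production estimate $\partial_t W+rLW \geq rP\Delta W + 2\nabla P\cdot\nabla W$ is in hand on $\{P>0\}$, the standard maximum principle for the degenerate operator $\partial_t - rP\Delta - 2\nabla P\cdot\nabla$ shows $e^{rLt}W\geq \min W(\cdot,0)$ at any interior minimum. The main obstacle is the boundary analysis: on $\partial\{P(\cdot,t)>0\}$ one has $P=0$ but, thanks to the spatially non-degenerate bound $|\nabla P|\leq Ce^{rG(0)t}$ from Lemma \ref{ue} and the Lipschitz regularity of the interface, $|\nabla P|$ remains finite, so $W$ degenerates only to $\tau(1+r/2)|\nabla P|^2$. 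Following the path-tracking argument of Case 2 in the proof of Lemma \ref{12concave} (choosing $\tfrac{dx^k}{dt}=-P_k$ so that $P(x(t),t)\equiv 0$ and using $P_t=|\nabla P|^2$ on the free boundary), the time derivative of $W$ along this transport is again shown to be $\geq -rLW$, which rules out the creation of a new minimum on the moving boundary. Combining the interior and boundary inequalities with the ODE comparison $\varphi'=-rL\varphi$ yields the claimed estimate with the explicit constant $\frac{c}{Kr+2}$.
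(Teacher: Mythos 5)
Your proposal reproduces the right ingredients (the $\tfrac12$-concavity of Lemma \ref{12concave}, the Aronson--B\'enilan bound of Lemma \ref{lAB}, the structural hypotheses \eqref{abc}--\eqref{abc1}, the constant $\tfrac{2}{Kr+2}$), but it runs the maximum principle on the wrong quantity, and this is a genuine gap rather than a matter of presentation.

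The paper does not work with $W$. It introduces
\[F := (t+\alpha)P_t + P,\qquad \alpha=\tfrac{2}{Kr+2},\]
where $P_t$ is the \emph{time derivative} of the pressure. The crucial structural fact is that $Y:=P_t$ solves the \emph{linearized} pressure equation
\[\partial_t Y = rP\Delta Y + 2\nabla P\cdot\nabla Y + rY\bigl[\Delta P+G(P)+PG'(P)\bigr],\]
so $F$ obeys a clean parabolic inequality
\[F_t \geq rP\Delta F + 2\nabla F\cdot\nabla P - rLF,\]
once \eqref{ab2}, \eqref{abc} and \eqref{abc1} are applied; no Bochner term ever appears. The maximum/comparison argument (run on $\tilde F := Fe^{rLt}+\epsilon t$, with the boundary tracking you correctly anticipated) gives $F\,e^{rLt}\geq F(0)\geq \tfrac{c}{Kr+2}$. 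Only \emph{afterwards} is the $\tfrac12$-concavity invoked: substituting $P_t = rP\Delta P+|\nabla P|^2+rPG(P)$ into $F$ and using $rP\Delta P\leq \tfrac{r}{2}|\nabla P|^2$ shows $W\geq F$, hence the stated bound on $W$. In short, $W$ is the \emph{output} of the concavity estimate applied to $F$, not the object of the parabolic comparison.

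Your plan instead attempts to derive $\partial_t W \geq rP\Delta W + 2\nabla P\cdot\nabla W - rLW$ directly. The Bochner identity you wrote out for $|\nabla P|^2$ is correct and contains $-2rP|D^2P|^2$. Through the coefficient $\tau(1+\tfrac r2)>0$ in $W$, this contributes the term $-2rP|D^2P|^2\,\tau(1+\tfrac r2)$ to your remainder $\mathcal R$, with a sign that works against a lower bound, and you never explain how it is absorbed. The ingredient you cite for this, $|D^2P|^2\geq\frac1N(\Delta P)^2$, only pushes $\mathcal R$ further down; and the $\tfrac12$-concavity bounds $\Delta P$ from above, not $|D^2P|^2$. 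So the claim ``$\mathcal R\geq -rLW$'' is unsubstantiated and, as written, would fail. The step you are missing is precisely the paper's trick of deferring the use of $\tfrac12$-concavity until after the comparison principle has been applied to the linearly-evolving quantity $F=\tau P_t + P$. (A smaller issue: your treatment of the initial condition, absorbing $rG(P_0)P_0$ via \eqref{abc}, is also not as clean as the paper's, which bounds $P_t(0)=rP_0(\Delta P_0+G(P_0))+|\nabla P_0|^2$ directly by \eqref{ini2}; and recall $G(P_0)$ may be negative when $P_0>P_M$.)
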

\begin{proof}[\bf Proof]
%\textbf{\emph{Step 1.}}
Let \[F=(t+\alpha)P_t+P\] 
with $\alpha$ being given later, which, by direct computations, solves
\begin{equation*}
    \begin{aligned}
 F_t=&2P_t+(t+\alpha)P_{tt}\\
 =&2P_t+(t+\alpha)[rP\Delta P+|\nabla P|^2+rPG(P)]_t\\
 =&2P_t+(t+\alpha)[rP_t\Delta P+rP\Delta P_t+2\nabla P_t\cdot\nabla P+rP_tG(P)+rP P_tG'(P)]\\
 =&2P_t-rP(\Delta P+G(P))+rF(\Delta P+G(P))+rP\Delta F-rP\Delta P\\
 &+2\nabla F\cdot\nabla P-2|\nabla P|^2+rPG'(P)(F-P)\\
 =&rP\Delta F+2\nabla F\cdot\nabla P+rF(\Delta P+G(P)+PG'(P))+rP(G(P)-PG'(P))\\
 \geq&rP\Delta F+2\nabla F\cdot\nabla P+rF(\Delta P+G(P)+PG'(P)+L)-rLF\\
  \geq&rP\Delta F+2\nabla F\cdot\nabla P-rLF,
\end{aligned}
\end{equation*}
where we used the pressure equation \eqref{pe} in second line, and the condition \eqref{concavityc} on $G(\cdot)$ in the seventh line and $\Delta P+G(P)+PG'(P)\geq -K-(L-K)\geq -L$, derived from both \eqref{ab2} and \eqref{abc1}, in the last line. Therefore, $\tilde{F}:=Fe^{rLt}+\epsilon t$ satisfies the differential inequality 
\begin{equation}\label{piee}
  \tilde{F}_t\geq rP\Delta \tilde{F}+2r\nabla P\cdot\nabla \tilde{F}+\epsilon.  
\end{equation}

We are going to show that $\tilde{F}$ does not attach $0$ in any finite time. If not, there exists $(x_0,t_0)$ with $t_0>0$ such that $\tilde{F}(x_0,t_0)=0$ and $\tilde{F}(x,t)>0$ in $\{P>0\}\cap\{t<t_0\}$.
By the maximum principle, the minimum of $\tilde{F}$ can not be obtained in the interior of the set $\{P>0\}$. Let $(x_0,t_0)$ be the minimum of  $\tilde{F}$ on the free boundary. By rotating the coordinate, we can assume that at the point $(x_0,t_0)$, $P_N>0$, and $P_i=0$ for all $i=1,...,N-1$. Then, it holds
\[\tilde{F}_N\geq 0\quad \text{at }(x_0,t_0).\]
Therefore, $\nabla P\cdot \nabla \tilde{F}\geq 0$ at $(x_0,t_0)$ and hence by \eqref{piee}, we obtain 
\begin{equation*}
\tilde{F}_t\geq\epsilon\quad\text{at }(x_0,t_0),
\end{equation*}
which is in contradiction with the minimum point $(x_0,t_0)$. Hence, 
\begin{equation}\label{ppie}\tilde{F}>0\quad\text{on } \{P>0\}.\end{equation}
The non-negative property $\tilde{F}\geq0$ yields
\begin{equation*}\label{pie}
  \tilde{F}_t\geq rP\Delta \tilde{F}+2r\nabla P\cdot\nabla \tilde{F}+\epsilon.  
\end{equation*}
Furthermore, following the similar arguments to verify \eqref{ppie}, the minimum of $\tilde{F}$ is attained in the initial data, it holds 
\begin{equation*}
\tilde{F}\geq \tilde{F}(0).
\end{equation*}
Let $\epsilon\to 0$, it follows
\begin{equation}\label{f0}
Fe^{rLt}\geq F(0).
\end{equation}

In addition, the compatibility conditions yields
$$F(0)=(\alpha P_t+P)|_{t=0}=\alpha r P_0(\Delta P_0+G(P_0))+\alpha|\nabla P_0|^2+P_0$$
Since $$\Delta P_0+G(P_0)\geq -K,$$ and we set $\alpha=:\frac{2}{Kr+2}$, it holds
\[F(0)\geq (1-\alpha K)P_0+\alpha |\nabla P_0|^2=\frac{1}{Kr+2}(P_0+|\nabla P_0|^2)\geq \frac{c}{Kr+2}.\]
Combining with \eqref{f0} proves
\[F\geq \frac{c}{Kr+2}e^{-rLt}\quad\text{on }\{P>0\}.\]
Furthermore, it holds by the pressure equation \eqref{012} that 
\begin{equation}\label{a1}
(t+\frac{2}{Kr+2})rP(\Delta P+G(P))+(t+\frac{2}{Kr+2})|\nabla P|^2+P\geq \frac{c}{Kr+2}e^{-Krt}\quad \text{on }\{P>0\}.
\end{equation}
On the other hand, $\frac{1}{2}$-concavity of the pressure $P$ (Theorem \ref{concavityrde}) yields 
\begin{equation}\label{a2}
P\Delta P\leq \frac{1}{2}|\nabla P|^2.
\end{equation}
Taking \eqref{a2} into \eqref{a1}, it holds 
\[\big[1+(t+\frac{2}{Kr+2})rG(P)\big]P+(t+\frac{2}{Kr+2})(1+\frac{r}{2})|\nabla P|^2\geq \frac{c}{Kr+2}e^{-rLt}\quad \text{on }\{P>0\}.\]
\end{proof}

\section{Hele-Shaw problem}\label{hsp}
Since the pressure $P$ solves an elliptic equation \eqref{hss} for the Hele-Shaw problem \eqref{hs}, does not solves  the degenerate parabolic equation \eqref{pe} anymore. We are going to  use a completely different approach to prove the sharp power concavity of the pressure for the Hele-Sahw problem \eqref{hss}.

To begin with, we are going to state the existence and uniqueness of the weak solution to the Hele-Shaw problem $\eqref{hs}$, which can be extracted from previous works \cite{DN2023,PERTHAME2014}.
\begin{theorem}\label{eu}
    Assume that the initial data $u_0$ satisfies $0\leq u_0\leq 1$ and is compactly supported, then there exists a unique solution $(u,P)$ to the Hele-Shaw problem $\eqref{hs}$.
\end{theorem}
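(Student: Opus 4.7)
Since the statement is flagged as known and attributed to \cite{DN2023, PERTHAME2014}, the strategy is to derive it by the standard incompressible (Hele-Shaw) limit procedure applied to the porous medium type reaction-diffusion equation \eqref{rde}. First I would construct, for each $m>1$, a weak solution $(u_m, P_m)$ of \eqref{rde}--\eqref{pe} with initial datum $u_{0,m} \to u_0$ satisfying $0 \le u_{0,m} \le 1$ and compactly supported. For such initial data one has $0 \le u_m \le 1$ by a straightforward $L^\infty$-comparison argument using that $G(P_M)=0$, hence $P_m = \frac{m}{m-1} u_m^{m-1}$ is uniformly bounded, and by the Aronson--B\'enilan estimate of Lemma \ref{lAB} one has $\Delta P_m + G(P_m) \ge -K/(1+rKt)$.

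Next I would collect the uniform a priori bounds: $L^\infty_{t,x}$ bounds on $u_m$ and $P_m$, an $L^2_{t,x}$ bound on $\nabla P_m$ obtained by multiplying \eqref{pe} by $P_m$ and integrating in space--time, and the above semiconvexity bound. These estimates, combined with an Aleksandrov-type reflection or the standard Bernstein argument of Lemma \ref{ue} applied to the approximations, give strong compactness of $u_m$ in $L^1_{\mathrm{loc}}$ and weak convergence $P_m \rightharpoonup P$ in a suitable $H^1$-type space. Passing to the limit $m\to\infty$ in the distributional formulation $\partial_t u_m = \Delta P_m + u_m G(P_m)$ yields the first equation of \eqref{hs}. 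The Hele-Shaw graph $(1-u)P=0$ follows from $P_m \le P_M$ for $u_m < 1$ and from $u_m^{m-1}\to 1$ wherever $P>0$ in the limit. The complementarity $P(\Delta P + G(P))=0$ is recovered by testing \eqref{pe} against $P_m$ itself, using that $rP_m|\nabla P_m|^2 \to 0$ in the limit (because $P_m$ is uniformly bounded and $r=m-1\to\infty$ is absorbed by the decay of $u_m^{m-1}$ away from $\{P>0\}$).

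For uniqueness I would use the classical Hilbert duality argument of B\'enilan--Crandall--Pierre: given two weak solutions $(u_1, P_1)$ and $(u_2, P_2)$ with the same initial datum, subtract the two equations and test against a regularized solution $\varphi$ of a backward dual problem whose coefficients are built from suitable averages of $P_1, P_2$. Writing $u_1 - u_2 = a(P_1 - P_2) + b(u_1 - u_2)$ with $a, b \ge 0$ coming from the Hele-Shaw graph and from the Lipschitz dependence of $G$ on $P$, a Gr\"onwall estimate then yields $\|u_1(t)-u_2(t)\|_{L^1} \le e^{Ct}\|u_1(0)-u_2(0)\|_{L^1}=0$.

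The main obstacle is the passage to the limit in the nonlinear term, in particular identifying the product $P\Delta P$ in the complementarity relation, since one only controls $P_m$ and $\Delta P_m$ in weak topologies. The standard resolution is to exploit the monotone graph structure linking $u_m$ and $P_m$ together with the almost-everywhere convergence of $u_m$, combined with an energy identity obtained by multiplying \eqref{pe} by $P_m$ and integrating, which upgrades the weak convergence of $\nabla P_m$ to a strong one on $\{P>0\}$; with this in hand, both the Hele-Shaw graph and the complementarity relation pass to the limit and the limiting pair $(u,P)$ is a weak solution of \eqref{hs}.
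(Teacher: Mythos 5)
Your proposal is correct and follows essentially the same strategy as the paper: existence is obtained by passing to the incompressible (Hele-Shaw) limit $m\to\infty$ in the porous medium type reaction-diffusion equation \eqref{rde}, and uniqueness by the Hilbert duality method. The paper's own proof simply observes that the hypotheses on $u_0$ fit the frameworks of \cite{DN2023} (for the limit $m\to\infty$) and \cite{PERTHAME2014} (for duality-based uniqueness) and cites them; you have reconstructed, at the sketch level, the content of those references.
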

\begin{proof}
We note that $0\leq u_0^{\gamma}\leq 1$ for any $\gamma>0$ due to $0\leq u_0\leq 1$. Then, from \cite{DN2023}, through the Hele-Shaw asymptotics for \eqref{rde} as $m\to\infty$, the weak solution for $\eqref{hs}$ exists. The uniqueness is verified in \cite{PERTHAME2014} by means of the Hilbert duality method.
\end{proof}
\begin{remark}
 It should be noticed that, under the initial integral assumptions as in Theorem \ref{eu}, the authors use the special structure of the porous medium type equation to obtain a strong convergence of the pressure and its gradient and also deal with the nonlinear reaction term by means of its concavity in \cite{DN2023}. Higher regularities, even including BV regularity, are no longer necessary.
\end{remark}

Let $(u,P)$ be the weak solution to $\eqref{hs}$ under the initial assumptions as in Thoerem~\ref{eu}, thanks to \cite[Theorem 1.1]{DN2023}, we know that  there exists $\Omega(t)$ for  $t\geq0$ with 
\[\Omega(0):=\Omega_0\]
such that  
    \begin{equation}\label{r1}
    P>0,\quad u=1,\quad \text{in }\Omega(t),
    \end{equation}
    \begin{equation}\label{r2}
        \Delta P+G(P)=0,\quad \mathcal{D}'(\text{Int}(\Omega(t))),
    \end{equation}
    \begin{equation}\label{r3}
  P=0,\quad u(t)=e^{G(0)(t-s)}u(s)\text{ with }0\leq s\leq t\quad\text{a.e. in }\mathbb{R}^N\backslash \Omega(t),      
    \end{equation}
where $\text{Int}(\Omega(t))$ is the interior of $\Omega(t)$. If $P$ is smooth to the boundary, these results \eqref{r1}-\eqref{r3} imply  \begin{equation*}
 \Omega(t)=\{x\in\mathbb{R}^n: P(x,t)>0\}\text{ for all }t\geq0\quad \text{and } \overline{\Omega(t)}=\overline{\{u=1\}}.
\end{equation*}
Furthermore, due to the complementarity relationship~$\eqref{hs}_2$ under the same assumption of smoothness to its support boundary for the pressure $P$, we have
\begin{equation}\label{hss}
\begin{cases}
-\Delta P=G(P),\quad &\text{in }\Omega(t),\\
P=0,\quad&\text{on }\partial\Omega(t),
\end{cases}
\end{equation}
for all $t\geq 0$.

\paragraph{Sharp power concavity of initial pressure.}

\vspace{2mm}

From the above discussions, we learn that  the initial pressure solves an elliptic equation with Dirichlet boundary condition~\eqref{hs0}, the first is to study the sharp power concavity of the initial pressure. 
\begin{lemma}
    Assume that the initial domain $\Omega_0$ is an convex set with smooth boundary. Then, the initial pressure $P_0$, solving the elliptic problem \eqref{hs0},  is $\alpha$-concave for any $\alpha\in[0,\frac{1}{2}]$, and  $\frac{1}{2}$ is the largest index for power concavity.
\end{lemma}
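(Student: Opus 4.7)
The plan is to combine the classical power concavity theory for semilinear elliptic Dirichlet problems on convex domains (as developed in \cite{ML1971, KN1983, KA1985, KB1986}) with an elementary composition argument, and then to exhibit the sharpness of the exponent $\frac{1}{2}$ by reduction to the torsion problem already treated in \cite{KA1985}.

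First, to establish the key exponent $\alpha=\frac{1}{2}$, the plan is to invoke the Korevaar--Kennington power concavity theorem for the Dirichlet problem \eqref{hs0}: if $G>0$ and $G^{-1/2}$ is a convex function, then the positive solution $P_0$ of $-\Delta P_0=G(P_0)$ in the convex domain $\Omega_0$ with zero boundary values satisfies that $P_0^{1/2}$ is concave on $\overline{\Omega_0}$. The hypothesis is easy to verify under \eqref{hsc}: a direct computation yields
\[
(G^{-1/2})'' \;=\; \frac{3}{4}(G')^{2}\, G^{-5/2} \;-\; \frac{1}{2}\, G''\, G^{-3/2} \;\geq\; 0
\]
on $\{G>0\}$, since $G''\leq 0$ and $G'\in\mathbb{R}$. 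Combined with convexity of $\Omega_0$ and smoothness of $\partial\Omega_0$, this already gives concavity of $P_0^{1/2}$.

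From this, the remaining exponents follow by composition. For any $\alpha\in(0,\frac{1}{2})$, we write $P_0^{\alpha}=\varphi(P_0^{1/2})$ with $\varphi(t)=t^{2\alpha}$; since $2\alpha\in(0,1)$, $\varphi$ is concave and nondecreasing on $[0,\infty)$, so its composition with the nonnegative concave function $P_0^{1/2}$ is concave on $\Omega_0$. For $\alpha=0$, the identity $\log P_0=2\log(P_0^{1/2})$ together with concavity and monotonicity of $\log$ on $(0,\infty)$ gives concavity of $\log P_0$ on $\Omega_0$.

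Finally, for sharpness of $\frac{1}{2}$, the plan is to specialize to the torsion case $G\equiv G_0>0$, which satisfies \eqref{hsc} trivially, so that \eqref{hs0} reduces to $-\Delta P_0=G_0$ on the convex $\Omega_0$ with $P_0=0$ on $\partial\Omega_0$. The sharpness of the exponent $\frac{1}{2}$ for this torsion problem on general convex domains with $N\geq 2$ is precisely the content of \cite{KA1985}: for any $\alpha>\frac{1}{2}$, there exists a smooth bounded convex $\Omega_0$ (for instance, a sufficiently anisotropic smoothed slab) whose torsion function fails to be $\alpha$-concave. Transferring such $\Omega_0$ directly into our setting with the above constant $G$ gives the sharpness claim. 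The principal hurdle is pinpointing the correct structural condition on $G$, namely convexity of $G^{-1/2}$, so that the concavity maximum principle applies; once that is observed, both the positive result and the sharpness follow from well-known machinery.
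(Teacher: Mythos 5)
Your positive direction is the same in spirit as the paper's: apply a power-concavity theorem for semilinear Dirichlet problems on convex domains to get $\frac{1}{2}$-concavity, then pass to smaller exponents by composing $P_0^{1/2}$ with the concave increasing map $t\mapsto t^{2\alpha}$ (which is exactly the content of the paper's Lemma on descent of concavity indices). One caveat: the structural hypothesis you impose, namely convexity of $G^{-1/2}$, is not the condition appearing in the theorem the paper actually cites (Bianchini--Salani), which for $p=\tfrac12$ requires concavity of $t\mapsto tG(t^2)$, equivalently $3G'(s)+2sG''(s)\le 0$; the two conditions differ in general, although under \eqref{hsc} both hold trivially. You should double-check that the version of the Korevaar--Kennington theorem you invoke really has the hypothesis ``$G^{-1/2}$ convex'' in the form stated, rather than, say, harmonic concavity of $G$.

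The sharpness direction contains a genuine gap. You assert that \cite{KA1985} directly supplies, for each $\alpha>\tfrac12$, a \emph{smooth} bounded convex domain whose torsion function fails to be $\alpha$-concave, and you propose ``a sufficiently anisotropic smoothed slab'' as an example. That example is wrong: for the slab $\{0<x_N<L\}$ the torsion function is $\tfrac12 x_N(L-x_N)$, which is concave and hence $\alpha$-concave for every $\alpha\in[0,1]$; likewise for an ellipse the torsion function $c\,(1-x^2/a^2-y^2/b^2)$ is concave, hence $\alpha$-concave for every $\alpha\in[0,1]$. Elongation alone does not break $\alpha$-concavity. What breaks it is a conical boundary singularity, and Kennington's own extremal configurations are domains with corners, not smooth ones, so the citation does not by itself deliver a smooth counterexample. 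The paper's proof supplies exactly the missing step: it builds a decreasing sequence of smooth convex domains $\Omega_k$ with $0\in\partial\Omega_k$ shrinking to a cone-cap $\Omega_\infty=K\cap B_1$, shows $P_k\to P_\infty$, passes the hypothetical $\alpha$-concavity inequality $P_k^\alpha(tz)\ge tP_k^\alpha(z)$ to the limit, and then derives a contradiction at the cone vertex by comparing $P_\infty$ with the barrier $v=x_N^2-a^2|x'|^2$, which gives $P_\infty(\tfrac12 e_N)\le t^{2-1/\alpha}/4\to 0$ for $\alpha>\tfrac12$. This approximation-and-limit argument, or an equivalent replacement, is essential and is not present in your proposal.
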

\begin{proof}Let $p=\frac{1}{2}$ and $h=G$ in Theorem \ref{ect}, we have 
\begin{equation*}g(t)=
 tG(t^2).
 \end{equation*}
Due to \eqref{hsc}, the second order derivative satisfies 
\begin{equation*}g''(t)=6tG'(t^2)+4t^3G''(t^2)\leq 0\quad t\geq0.
 \end{equation*}
Then,  by Theorem \ref{ect}, $P_0$ is $\frac{1}{2}$-concave in $\Omega_0$. Lemma~\ref{cam} further yields that  $P_0$ is $\alpha$-concave for any $\alpha\in[0,\frac{1}{2}]$.

\vspace{2mm}

Next, we choose a sequence of smooth domains to show that the index $\frac{1}{2}$ is the upper bound for the concavity of the solution to the elliptic equation \eqref{hs0} on the smooth domain. To this end, we define $x\in\mathbb{R}^N$,  $x_N=x\cdot e_N$ with $e_N=(0,...,0,1)$, and $x'=x-x_N$, the cone is defined by 
\[K:=\{x\in \mathbb{R}^N: x_n> a |x'|,\ a>0\},\]
where $a>0
$ is determined later. Let $B_1:=\{x\in \mathbb{R}^N: |x|< 1\}$, then the desired   open domain is given by 
\[\Omega_\infty:=K\cap B_1.\]%Next, we extend  Lipschitz domain to the smooth domain. To this end, 
We give a sequence of smooth convex domains satisfying
\begin{equation}\label{omegak}\Omega_\infty\subset...\subset \Omega_k\subset\Omega_{k-1}\subset...\subset\Omega_1\subset B_1 
\end{equation}
with 
\[0\in\partial\Omega_k\quad\text{for } k=1,2,3.... \text{ and } \lim\limits_{k\to\infty}\Omega_k=\cap_{k=0}^{\infty}\Omega_k=\Omega_\infty.\]
Let $P_{k}$ solve the elliptic problem with Dirichlet boundary codnition
\begin{equation}\label{omegake}
\begin{cases}
-\Delta P_{k}=G(P_{k}),\quad &\text{in }\Omega_k,\\
P_{k}=0,\quad&\text{on }\partial\Omega_k.
\end{cases}
\end{equation}
If $w$ is a solution of the Dirichlet type elliptic equation 
\begin{equation*}
\begin{cases}-\Delta w=G(0),\quad &\text{in }B_1,\\w=0,\quad&\text{on }\partial B_1,\end{cases}\end{equation*}
then
\[0\leq w=\frac{G(0)}{2N}(1-|x|^2)\leq \frac{G(0)}{2N},\quad \text{in }B_1. \] 
%Hence, we have \begin{equation}\label{pkc}\|P_k\|_{\mathcal{C}(\Omega)}\leq \frac\end{equation}
It also holds by the standard comparison principle that
\begin{equation}\label{pkb}0\leq P_{k}\leq w\leq \frac{G(0)}{2N}\quad \text{in }\Omega_\infty \text{ for any }k\geq 0.\end{equation}
Then, by means of the standard interior Shauder estimate in \cite{GT2001}, for any open subset $\mathcal{A}\subset \Omega_\infty$, we have
\[\|P_{k}\|_{\mathcal{C}^{1,\gamma}(\mathcal{A})}\leq C,\]
where $C$ is a positive constant only depending on $\mathcal{A}$.  By the Sobolev compactness embedding \cite{ELC2010}, after extraction of a subsequence (still denoted by $P_{k}$), it holds
\[P_{k}\to P_{\infty},\quad \text{in }\mathcal{C}_{loc}^{1}(\Omega_\infty),\ \text{ as }k\to\infty.\]
In addition, due to \eqref{omegak}, the comparison principle for the equation \eqref{omegake} yields 
\begin{equation}\label{mr}
0\leq P_{k+1}\leq P_{k}\quad\text{in }\Omega_\infty\text{ for } k=1,2,....
\end{equation}
By means of Dini's theorem, \eqref{pkb} and \eqref{mr} yield 
\begin{equation*}
    \|P_{k}-P_\infty\|_{\mathcal{C}(\Omega_\infty)}\to0\quad\text{as }k\to\infty.
\end{equation*}
It is immediate to conclude that  $P_{\infty}$ is the unique solution of the elliptic problem
\begin{equation}\label{pinftye}
\begin{cases}
-\Delta P_{\infty}=G(P_{\infty}),\quad &\text{in }\Omega_\infty,\\
P_{\infty}=0,\quad&\text{on }\partial\Omega_\infty.
\end{cases}
\end{equation}

We randomly fix  $\alpha\in(\frac{1}{2},1]$ and suppose that the solution $P$ of the elliptic problem \eqref{hs0} on the any given smooth convex domain is $\alpha$-concave. By the definition of $\alpha$-concavity, for $y=0\in \partial\Omega_\infty\cap\partial\Omega_k$, $k=1,2...$, and any  $t\in (0,1)$, $z\in \Omega_\infty$, we have $(1-t)y+tz=tz\in\Omega_\infty$ and
\begin{equation*}
P_{k}^{\alpha} (tz)=P_{k}^{\alpha} ((1-t)y+tz)\geq (1-t)P_{k}^{\alpha}(y)+tP_{k}^{\alpha}(z)=tP_{k}^{\alpha}(z)>0.
\end{equation*}
Let $k\to\infty$, we get 
\begin{equation}\label{pac}
P_{\infty}^{\alpha} (tz)\geq tP_{\infty}^{\alpha}(z)>0.
\end{equation}

Let  $v:=x_n^2-a^2|x'|^2 $ in $\Omega_\infty$, choosing $a\geq \sqrt{\frac{2+G(0)}{2N}}$, then 
\[\Delta(v-P_{\infty})=2(1-a^2N)+G(P_{\infty})\leq -2(a^2N-1)+G(0)\leq0. \]
Since $v\geq 0$ on $\partial\Omega_\infty$, the compsrison principle yields 
\[P_{\infty}\leq v\quad \text{in }\Omega_\infty.\]
We take $z=\frac{1}{2}e_N$ for \eqref{pac}, then it holds 
\begin{equation}\label{p0}P_{\infty}(\frac{1}{2}e_N)\leq\frac{P_{\infty}(t\frac{1}{2}e_N)}{t^{\frac{1}{\alpha}}}\leq \frac{v(t\frac{1}{2}e_N)}{t^{\frac{1}{\alpha}}}=\frac{t^{2-\frac{1}{\alpha}}}{4}\quad\text{ for any } t\in(0,1).\end{equation}
One can directly compute
\[\liminf\limits_{t\to0^+}\frac{t^{2-\frac{1}{\alpha}}}{4}=0\quad\text{ for }\alpha\in(\frac{1}{2},1],\]
which implies  the inequality  \eqref{p0} can not hold if $t$ is small enough. The contradiction appears because $P_{\infty}(\frac{1}{2}e_N)>0$ can be derived by the strong maximum principle for the elliptic problem~\eqref{pinftye}.

\end{proof}

\paragraph{Preservation of $\frac{1}{2}$-concavity.} This part is devoted to proving the preservation of $\frac{1}{2}$-concavity for the pressure of the Hele-Shaw problem \eqref{hs}. The key is to obtain the convexity for the spatial support of the pressure.
\begin{lemma}Assume that the initial density $0\leq u_0\leq 1$ is compactly supported and  the support of the initial pressure $P_0$ given by $\Omega_{0}:=\{u_0=1\}$ is convex with smooth boundary, and suppose that the pressure $P$ for the Hele-Shaw problem \eqref{hs} is smooth to its support boundary, then $P$ is $\alpha$-convex all the time for any  $\alpha\in[0,\frac{1}{2}]$. 
\end{lemma}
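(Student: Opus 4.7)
The plan is to reduce the time-preservation statement to two ingredients already available in the paper. By Lemma \ref{cam} it is enough to prove $\frac{1}{2}$-concavity of $P(\cdot,t)$ at every fixed $t\geq 0$, since $\frac{1}{2}$-concavity implies $\alpha$-concavity for every $\alpha\in[0,\frac{1}{2}]$. For each fixed $t$, the smoothness-to-the-boundary hypothesis together with the characterization \eqref{r1}--\eqref{r3} shows that $P(\cdot,t)$ solves the elliptic Dirichlet problem \eqref{hss} on $\Omega(t)=\{P(\cdot,t)>0\}$; the proof of the initial-pressure lemma then applies verbatim and returns $\frac{1}{2}$-concavity, provided $\Omega(t)$ is convex with smooth boundary. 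The whole argument therefore reduces to showing that the moving support $\Omega(t)$ inherits the convexity and smoothness of $\Omega_0$ for every $t\geq 0$.

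For this geometric step I would proceed by a continuity/bootstrap argument. Set $T^{*}:=\sup\{T>0:\Omega(t)\text{ is convex and smooth for every }t\in[0,T]\}$, which is strictly positive by the initial hypothesis. On $[0,T^{*})$ the reduction above yields $\frac{1}{2}$-concavity of $P(\cdot,t)$, so $\sqrt{P(\cdot,t)}$ is concave and its positivity set $\Omega(t)$ is automatically convex. The motion of $\partial\Omega(t)$ is governed by Darcy's law $V=\nabla P\cdot\vec n$, with non-degenerate normal derivative at the boundary (via the Hopf lemma applied to $-\Delta P=G(P)$ with $G(0)>0$), so the free boundary flows smoothly in time. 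To upgrade to $T^{*}=\infty$ I would show that convexity cannot fail at $T^{*}$ by passing to the limit $\epsilon\to 0^{+}$ in the convexity of the super-level sets $\{\sqrt{P(\cdot,T^{*}-\epsilon)}>c\}$, ruling out the formation of any non-convex protrusion by a viscosity-solution argument in the spirit of \cite{DL2002,DL2004}. Once $T^{*}=\infty$ is established, Paragraph~1 delivers $\frac{1}{2}$-concavity for all $t$, and Lemma~\ref{cam} finishes the chain down to every $\alpha\in[0,\frac{1}{2}]$.

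The main obstacle is precisely this propagation-of-convexity step. In the unreactive case \cite{DL2004} established log-concavity preservation by viscosity-solution methods, but here the non-constant reaction $G(P)$ modifies both the interior elliptic equation and the effective boundary velocity. The decisive structural fact available is that $G\geq 0$ throughout the range attained by $P$ (since $G(0)>0$ and $G'\leq 0$, and $P$ is uniformly bounded), so that $-\Delta P\geq 0$ and $P$ is super-harmonic on each time slice, which is compatible with convex-set preservation; however, the viscosity/level-set argument must be adapted to accommodate the $G$-dependent perturbation of the normal velocity, and that is where the technical work will concentrate.
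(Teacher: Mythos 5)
Your overall architecture matches the paper's: reduce to the elliptic Dirichlet problem \eqref{hss} on each time slice, invoke Theorem \ref{ect} to get $\frac{1}{2}$-concavity once $\Omega(t)$ is known to be convex, and finish with Lemma \ref{cam}. You also correctly identify the crux as propagating convexity of the support. However, you leave this step unresolved and sketch a viscosity/level-set argument \`a la \cite{DL2002,DL2004} that is not carried out — and this is precisely where the paper does something different and decisive.

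The paper closes the loop with a first-failure contradiction argument that exploits the degeneracy of $P$ at the boundary. Suppose $\frac{1}{2}$-concavity first fails at time $t_0$ with margin $\delta>0$, i.e. $(\sqrt{P})_{\alpha\alpha}\leq \delta$ on $\overline{\Omega(t)}$ for all $t\leq t_0$ with equality at some $(x_0,t_0)$. The key observation is that this \emph{approximate} concavity still yields \emph{exact} convexity of $\partial\Omega(s)$ for every $s\leq t_0$: rewriting the bound as $P_{\alpha\alpha}-\frac{P_\alpha^2}{2P}\leq 2\delta\sqrt{P}$ and sending $x\to y_0\in\partial\Omega(s)$, the right side vanishes because $P\to 0$, while l'H\^opital (using Hopf's lemma for the nondegeneracy of $P_N$) shows the left side tends to $P_{\alpha\alpha}(y_0,s)$ for tangential $\alpha$. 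Hence $P_{\alpha\alpha}(y_0,s)\leq 0$, which via the boundary parameterization gives $\gamma_{\alpha\alpha}\geq 0$, i.e. convexity of $\Omega(s)$. Theorem \ref{ect} then forces exact $\frac{1}{2}$-concavity up to $t_0$, contradicting the assumed $\delta>0$. In short, the $\delta$-error is multiplied by $\sqrt{P}$ and dies at the boundary, so you never need a genuinely dynamical propagation-of-convexity theorem — the elliptic time-slice structure plus the boundary-limit trick does all the work. This is the ingredient missing from your bootstrap: without it, the claim ``convexity cannot fail at $T^*$'' remains unproved, and the viscosity-solution route you gesture at would require substantially more machinery than the paper uses.
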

\begin{proof}At any fixed time, the stiff pressure $P$ solves an elliptic equation \eqref{hss}, the concavity of this pressure is determined by the shape of the spatial domain $\Omega(t)$. %From Theorem~\ref{ect}, we conclude that if $P$ is not $\frac{1}{2}$-concave, then $\Omega(t)$ must not be convex. 
Assume that the initial pressure $P_0$ is $\frac{1}{2}$-concave and the stiff pressure $P$ is smooth to its support boundary.

If $P$ is not $\frac{1}{2}$-concave all the time,  there exists some $\delta>0$, the time $t_0$,  the  unit vector $\alpha'\in\mathbb{R}^N$ and $x_0\in \overline{\Omega(t_0)}$ such that 
\begin{equation}\label{x_0t_0}(\sqrt{P})_{\alpha'\alpha'}=\delta\quad\text{at }(x_0,t_0).\end{equation}
And furthermore, it holds for any unit vector $\alpha\in\mathbb{R}^N$ that 
\begin{equation}\label{spb}(\sqrt{P})_{\alpha\alpha}=\frac{P_{\alpha\alpha}}{2\sqrt{P}}-\frac{P_\alpha^{2}}{4P^{\frac{3}{2}}}\leq \delta\quad \text{in }\overline{\Omega(t)}\times\{0\leq t\leq t_0\}.\end{equation}

Without loss of generality, assume that $e_N=(0,...,1)$ is the normal vector at $(y_0,s_0)$ for any $y_0\in\partial\Omega(s_0)$.  We parameterize the nearby of $x_0$ for the surface $\partial\Omega(s_0)$ by $(x',\gamma(x',s))$ with
\[P_{\alpha}(y_0,s_0)=\gamma_{\alpha}(y_0',s_0)=0 \] for all unit vector satisfying $\alpha\bot e_N$, 
and Hopf's lemma for elliptic problem \eqref{hss} yields 
\[c_0\leq P_N(x_0,s_0)\leq \frac{1}{c_0}\text{ for some }c_0>0. \]
By means of  l'Hospital's rule, we have
\begin{equation}\label{pii}
\begin{aligned}
0=\lim\limits_{x\to y_0}2\delta\sqrt{P(x,s_0)}\geq& \lim\limits_{x\to y_0}[P_{\alpha\alpha}-\frac{P_\alpha^2}{2P}]\\
=&\lim\limits_{x\to y_0}[P_{\alpha\alpha}-\frac{P_{N\alpha}P_\alpha}{P_N}] =P_{\alpha\alpha}(y_0,s_0).
\end{aligned}
\end{equation}
Thanks to
\[P(x',\gamma(x',s),s_0)=0,\quad x'\in\mathbb{R}^{N-1}\cap B_{\varepsilon}(0)\text{ for some }\varepsilon>0,\]
%and \eqref{pii}, 
we get 
\[P_{\alpha\alpha}+2P_{N\alpha}\gamma_\alpha(x',s)+P_{NN}|\gamma_\alpha(x',s)|^2+P_N\gamma_{\alpha\alpha}(x',s)=0.
\]
Hence, at $(y_0,s_0)$, it holds for all unit vector satisfying $\alpha\bot e_N$ that 
\[\gamma_{\alpha\alpha}(x_0',s_0)=-\frac{P_{\alpha\alpha}}{P_N}\geq 0\]
which implies that $\Omega(s_0)$ is convex for all $0\leq s_0\leq t_0$. Using Theorem \ref{ect} for the elliptic equation with Dirichlet boundary condition, $P(t)$ is $\frac{1}{2}$-concave on $\Omega(t)$ for $0\leq t\leq t_0$, which is contradicted with \eqref{x_0t_0}.  Hence, the pressure $P$ is $\frac{1}{2}$-concave all the time.

Finally, Lemma \ref{cam} verifies that $P$ is $\alpha$-concave for $0\leq \alpha\leq \frac{1}{2}$ all the while.
\end{proof}
\section{Conclusions and perspectives}\label{cp} We investigate and compare the geometric properties of the porous medium type reaction-diffusion equation \eqref{rde} and the corresponding Hele-Shaw problem \eqref{hs}. Precisely, we proved that $\frac{1}{2}$ is the sharp index for the preservation of the concavity over time for the pressure of these two free boundary problems. The key distinction lies in the role of this index: for the porous medium-type reaction-diffusion equation \eqref{rde}, the index $\frac{1}{2}$ uniquely guarantees the preservation of concavity over time. In contrast, for the Hele-Shaw problem \eqref{hs}, $\frac{1}{2}$ acts as an upper bound on the index to preserve concavity as the time progresses. Furthermore, using the concave property, we establish non-degenerate estimates for the porous medium type reaction-diffusion equation, which means that the spatially Lipschitz continuity is sharp. 

\vspace{2mm}

Based on the results of this paper, the further problem is to prove that the pressure is smooth to its support boundary as \cite{DHL2001,DL2004} for the two free boundary problems, which is the a priori assumption in the processes of paper. Furthermore, if we take the nutrient into account for the two free boundary problems \cite{PERTHAME2014}, how are the geometric properties including the concavities of the pressure and free boundary regularities? There were some results~\cite{JKT2023,KL2023,CJK2023} on the free boundary regularities for the Hele-Shaw problem of tumor growth with/without nutrient diffusion. For the tumor growth model with nutrient of porous medium type, the study of free boundary and its geometric property is a completely open problem. 
\appendix
\section{Some preliminary results} 
The following theorem, a simple version of \cite{BS2013},  gives the sufficient condition of  power concavity of the solution to the elliptic equation with Dirichlet boundary condition. 
\begin{theorem}\label{ect}
 Let $\Omega$ be a convexset with Lipschitz boundary, and $ v\in \mathcal{C}(\overline{\Omega})\cap\mathcal{C}^2(\Omega)$ is the solution for the following elliptic equation:
 \[\begin{cases}
  \Delta v+ h(v)=0,\quad&\text{in }\Omega,\\
  v=0,\quad&\text{on }\partial\Omega,
 \end{cases}\]
 where $h\in \mathcal{C}^2([0,\infty))$ is a non-increasing function. Then, $v$ is {\red{$p$}}-concave if $$g(t)=\begin{cases}
 t^{3-\frac{1}{p}}h(t^{\frac{1}{p}}),\quad &\text{if }p\neq0,\\
 e^{-t}h(e^t),\quad &\text{if }p=0,
 \end{cases}$$
 is concave in $[0,\infty)$ for $p\in[0,1]$.
\end{theorem}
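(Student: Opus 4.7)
The plan is to substitute $w := v^p$ for $p \in (0, 1]$ (and $w := \log v$ for $p = 0$), transform the equation $\Delta v + h(v) = 0$ into a quasilinear elliptic equation for $w$, and show that $w$ is concave via the concave-envelope/viscosity-solution argument of \cite{BS2013}. The role of the concavity of $g$ is to provide precisely the structural condition that makes the concave envelope of $w$ a viscosity subsolution of the same equation, so that a comparison principle forces the envelope to coincide with $w$.

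First I would verify by chain rule that for $p \in (0, 1]$ the substitution $w = v^p$ turns the equation into
\begin{equation*}
w^2\,\Delta w + \tfrac{1-p}{p}\, w\,|\nabla w|^2 + p\, g(w) = 0 \quad\text{in }\Omega,\qquad w = 0 \text{ on }\partial\Omega,
\end{equation*}
while for $p = 0$ the substitution $w = \log v$ gives $\Delta w + |\nabla w|^2 + g(w) = 0$. In both cases, the entire $w$-nonlinearity is carried by $g$; the remaining part of the operator is linear in $D^2 w$ and quadratic in $\nabla w$ with coefficient a simple power of $w$. The exponent $3 - 1/p$ in the definition of $g$ is produced by balancing the $|\nabla w|^2/w$ term against the original $h(v)$ after the substitution and multiplying through by $w^2$ to clear denominators.

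Second, to pass from the PDE to concavity I would approximate $\Omega$ by a nested family of smooth strictly convex subdomains $\Omega_k \Subset \Omega$, $\Omega_k \uparrow \Omega$, solve the Dirichlet problem on each $\Omega_k$ to get $v_k \in \mathcal{C}^2(\Omega_k)\cap \mathcal{C}(\overline{\Omega_k})$, and set $w_k := v_k^p$. The concave envelope $w_k^{**}$ of $w_k$ on $\Omega_k$ satisfies $w_k^{**} \geq w_k$ by construction and $w_k^{**} = w_k = 0$ on $\partial\Omega_k$ by convexity of $\Omega_k$. Using the concavity of $g$ together with the linearity of the operator in $D^2 w$, one shows that $w_k^{**}$ is a viscosity subsolution of the same quasilinear equation: at any point $z$ where $w_k^{**}(z) = \sum_i \lambda_i w_k(x_i)$ with $z = \sum_i \lambda_i x_i$ and $\sum_i \lambda_i = 1$, substituting the PDE at each supporting point $x_i$ and invoking the concavity of $g$ produces the required one-sided viscosity inequality at $z$. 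The comparison principle for the quasilinear equation then gives $w_k^{**} \leq w_k$, hence $w_k = w_k^{**}$ is concave on $\Omega_k$. Passing to the limit $k \to \infty$ by elliptic $\mathcal{C}^{1,\alpha}_{loc}$ compactness and monotonicity yields $p$-concavity of $v$ on $\Omega$.

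The main obstacle is verifying the subsolution property of $w_k^{**}$ at the viscosity level, because the operator $F(w, q, M) = w^2\,\mathrm{tr}(M) + \tfrac{1-p}{p}\,w|q|^2 + p\,g(w)$ degenerates at $\{w = 0\}$ (the prefactor $w^2$ vanishes there) and is not monotone in $w$ in the simple way required by textbook concave-envelope results. One handles this via the form of the argument in \cite{BS2013}: rather than touching $w_k^{**}$ by a generic $\mathcal{C}^2$ test function, one constructs test functions adapted to the barycentric representation $z = \sum_i \lambda_i x_i$ and uses the concavity of $g$ together with a strict positivity bound for $w_k$ at the interior supporting points (which rules out the degenerate case $w_k = 0$ in the interior). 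The degenerate endpoint $p = 0$, where $w = \log v \to -\infty$ on $\partial\Omega$, requires the further regularization $\log(v + \varepsilon)$ and a limit $\varepsilon \to 0^+$ using interior uniform bounds; the merely Lipschitz regularity of $\partial\Omega$ is absorbed by the smooth approximation $\Omega_k \uparrow \Omega$.
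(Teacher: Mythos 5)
First, a point of comparison: the paper does not prove this statement at all --- Theorem \ref{ect} is stated in the appendix as ``a simple version of \cite{BS2013}'' and is used as a black box. So your proposal is effectively a reconstruction of the proof in that reference, and its overall architecture (substitute $w=v^{p}$, resp.\ $w=\log v$; pass to the transformed quasilinear equation; show the concave envelope $w^{**}$ is a viscosity subsolution using the concavity of $g$; conclude $w^{**}=w$; approximate the Lipschitz convex domain by smooth convex subdomains) is indeed the envelope/viscosity route of Alvarez--Lasry--Lions as adapted in \cite{BS2013}. Your computation of the transformed equation $w^{2}\Delta w+\tfrac{1-p}{p}\,w|\nabla w|^{2}+p\,g(w)=0$ (and $\Delta w+|\nabla w|^{2}+g(w)=0$ for $p=0$) is correct, and you identify correctly that the concavity of $g$ is the structural hypothesis that must drive the envelope step.

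That said, two steps are asserted in a way that would not survive as written. (i) The claim that the envelope-subsolution property follows from ``the concavity of $g$ together with the linearity of the operator in $D^{2}w$'' is misleading: the operator $F(w,q,M)=w^{2}\operatorname{tr}(M)+\tfrac{1-p}{p}w|q|^{2}+p\,g(w)$ is genuinely nonlinear in $(w,M)$ jointly, and at the supporting points $x_{i}$ of the envelope both the coefficient $w_k(x_i)^{2}$ and $\operatorname{tr}(M_i)$ vary, so the barycentric combination of the equations at the $x_{i}$ does not directly produce the inequality at $z$. The actual argument needs the matrix harmonic-mean inequality relating $D^{2}\varphi(z)$ to the Hessians at the touching points, plus a specific algebraic verification that concavity (in fact a harmonic-concavity--type structure) of $g$ is exactly what closes that inequality; this is the entire content of \cite{BS2013}, so deferring to it is legitimate, but the reason you give for why it works is not the right one. (ii) The sentence ``the comparison principle for the quasilinear equation then gives $w_k^{**}\leq w_k$'' invokes a tool that is not available off the shelf: $F$ is not proper in $w$ (take $h\equiv 1$, $p=\tfrac12$, so $g(t)=t$ is increasing), and it degenerates where $w=0$, i.e.\ precisely on the boundary where the two functions are to be compared. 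Standard viscosity comparison therefore does not apply, and indeed uniqueness for such Dirichlet problems can fail when $g$ is increasing. The literature closes this step differently --- e.g.\ by a Korevaar--Kennington concavity-function minimum-point argument, or by a strictness/perturbation argument showing a positive interior maximum of $w_k^{**}-w_k$ contradicts the subsolution property --- and your proof needs one of these replacements to be complete.
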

The following lemma is from \cite[Property 2]{KA1985}

\begin{lemma}\label{cam} If $u$ is $\alpha$-concave for $\alpha>0$ in a convex domain, then $u$ is $\beta$-concave for all $0\leq \beta\leq \alpha$.
\end{lemma}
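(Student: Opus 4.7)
The plan is to reduce both regimes $\beta \in (0,\alpha]$ and $\beta = 0$ to the elementary composition principle: a non-decreasing concave function of a concave function on a convex set is concave. First, I would observe that the support $\{u>0\}$ is convex. Indeed, by hypothesis $u^{\alpha}$ is concave and non-negative on its support, and $\{u>0\}=\{u^\alpha>0\}$ is then the strict super-level set (at height $0$) of a concave function, so it is convex. This allows us to apply convex-combination arguments freely on $\{u>0\}$.

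Next, for $\beta\in(0,\alpha]$, introduce $\phi(s):=s^{\beta/\alpha}$ on $[0,\infty)$. Since $\beta/\alpha\in(0,1]$, the function $\phi$ is non-decreasing and concave (strictly concave when $\beta<\alpha$, affine when $\beta=\alpha$). Writing $u^{\beta}=\phi(u^{\alpha})$ and combining the concavity of $u^\alpha$ with the composition rule, one obtains, for $x,y\in\{u>0\}$ and $t\in[0,1]$,
\[
u^{\beta}(tx+(1-t)y)=\phi\bigl(u^{\alpha}(tx+(1-t)y)\bigr)\geq \phi\bigl(tu^{\alpha}(x)+(1-t)u^{\alpha}(y)\bigr)\geq t\,u^{\beta}(x)+(1-t)u^{\beta}(y),
\]
where the first inequality uses monotonicity of $\phi$ and concavity of $u^{\alpha}$, and the second uses concavity of $\phi$. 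Hence $u^{\beta}$ is concave on $\{u>0\}$, i.e.\ $u$ is $\beta$-concave.

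For the endpoint $\beta=0$, write $\log u=\alpha^{-1}\log(u^{\alpha})$ on $\{u>0\}$. The natural logarithm is non-decreasing and concave on $(0,\infty)$, and $u^{\alpha}$ is positive and concave on $\{u>0\}$, so the same composition argument shows $\log(u^{\alpha})$ is concave, and consequently so is $\log u$. Thus $u$ is $0$-concave.

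I do not expect any genuine obstacle: the statement is a purely convex-analytic fact and does not interact with any of the PDE structure used elsewhere in the paper. The only minor point worth verifying is the convexity of $\{u>0\}$, which is necessary so that convex combinations of points in the support remain in the support and the composition inequality is meaningful; as noted above, this is immediate from the $\alpha$-concavity hypothesis.
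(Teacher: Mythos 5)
Your argument is correct, and it is the standard composition proof (a non-decreasing concave function of a concave function is concave, applied to $\phi(s)=s^{\beta/\alpha}$ and, for the endpoint, to $\alpha^{-1}\log s$); note that the paper itself offers no proof here, simply citing Property~2 of Kennington \cite{KA1985}, whose proof is essentially the one you give. One small remark: your preliminary step deducing convexity of $\{u>0\}$ from concavity of $u^{\alpha}$ is slightly circular as phrased (concavity ``on its support'' already presupposes a convex domain), but it is moot, since the lemma's hypothesis explicitly places $u$ in a convex domain, so you may simply take the convexity of the support as given.
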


%\bibliography{tumorbibfile}

\begin{thebibliography}{10}

\bibitem{BV1987}
P.~B\'enilan and J.~L. V\'azquez.
\newblock Concavity of solutions of the porous medium equation.
\newblock {\em Trans. Amer. Math. Soc.}, 299(1):81--93, 1987.

\bibitem{BS2013}
M.~Bianchini and P.~Salani.
\newblock Power concavity for solutions of nonlinear elliptic problems in
  convex domains.
\newblock In {\em Geometric properties for parabolic and elliptic {PDE}'s},
  volume~2 of {\em Springer INdAM Ser.}, pages 35--48. Springer, Milan, 2013.

\bibitem{BCGS2010}
D.~Bresch, T.~Colin, E.~Grenier, B.~Ribba, and O.~Saut.
\newblock Computational modeling of solid tumor growth: the avascular stage.
\newblock {\em SIAM J. Sci. Comput.}, 32(4):2321--2344, 2010.

\bibitem{BD2009}
H.~Byrne and D.~Drasdo.
\newblock Individual-based and continuum models of growing cell populations: a
  comparison.
\newblock {\em J. Math. Biol.}, 58(4-5):657--687, 2009.

\bibitem{CW2021}
A.~Chau and B.~Weinkove.
\newblock The {S}tefan problem and concavity.
\newblock {\em Calc. Var. Partial Differential Equations}, 60(5):Paper No. 176,
  13, 2021.

\bibitem{CW2024}
A.~Chau and B.~Weinkove.
\newblock Non-preservation of {\it {$\alpha $}}-concavity for the porous medium
  equation.
\newblock {\em Adv. Math.}, 440:Paper No. 109520, 18, 2024.

\bibitem{CD2017B}
R.~Cherniha and V.~Davydovych.
\newblock {\em Nonlinear reaction-diffusion systems}, volume 2196 of {\em
  Lecture Notes in Mathematics}.
\newblock Springer, Cham, 2017.
\newblock Conditional symmetry, exact solutions and their applications in
  biology.

\bibitem{CJK2023}
C.~Collins, M.~Jacobs, and I.~Kim.
\newblock Free boundary regularity for tumor growth with nutrients and
  diffusion.
\newblock {\em arXiv preprint arXiv:2309.05971}, 2023.

\bibitem{DH1998}
P.~Daskalopoulos and R.~Hamilton.
\newblock Regularity of the free boundary for the porous medium equation.
\newblock {\em J. Amer. Math. Soc.}, 11(4):899--965, 1998.

\bibitem{DHL2001}
P.~Daskalopoulos, R.~Hamilton, and K.~Lee.
\newblock All time {$C^\infty$}-regularity of the interface in degenerate
  diffusion: a geometric approach.
\newblock {\em Duke Math. J.}, 108(2):295--327, 2001.

\bibitem{DL2002}
P.~Daskalopoulos and K.-A. Lee.
\newblock Convexity and all-time {$C^\infty$}-regularity of the interface in
  flame propagation.
\newblock {\em Comm. Pure Appl. Math.}, 55(5):633--653, 2002.

\bibitem{DL2004}
P.~Daskalopoulos and K.-A. Lee.
\newblock All time smooth solutions of the one-phase {S}tefan problem and the
  {H}ele-{S}haw flow.
\newblock {\em Comm. Partial Differential Equations}, 29(1-2):71--89, 2004.

\bibitem{DN2023}
N.~David.
\newblock Phenotypic heterogeneity in a model of tumour growth: existence of
  solutions and incompressible limit.
\newblock {\em Comm. Partial Differential Equations}, 48(4):678--710, 2023.

\bibitem{DAVID2021}
N.~David and B.~Perthame.
\newblock Free boundary limit of a tumor growth model with nutrient.
\newblock {\em J. Math. Pures Appl. (9)}, 155:62--82, 2021.

\bibitem{DS2024}
N.~David and M.~Schmidtchen.
\newblock On the incompressible limit for a tumour growth model incorporating
  convective effects.
\newblock {\em Comm. Pure Appl. Math.}, 77(5):2613--2650, 2024.

\bibitem{DQZ2020}
Y.~Du, F.~Quir\'{o}s, and M.~Zhou.
\newblock Logarithmic corrections in {F}isher-{KPP} type porous medium
  equations.
\newblock {\em J. Math. Pures Appl. (9)}, 136:415--455, 2020.

\bibitem{ELC2010}
L.~C. Evans.
\newblock {\em Partial differential equations}, volume~19 of {\em Graduate
  Studies in Mathematics}.
\newblock American Mathematical Society, Providence, RI, second edition, 2010.

\bibitem{FKA1937}
R.~A. Fisher.
\newblock The wave of advance of advantageous genes.
\newblock {\em Annals of eugenics}, 7(4):355--369, 1937.

\bibitem{GT2001}
D.~Gilbarg and N.~S. Trudinger.
\newblock {\em Elliptic partial differential equations of second order}.
\newblock Classics in Mathematics. Springer-Verlag, Berlin, 2001.
\newblock Reprint of the 1998 edition.

\bibitem{GKM2022}
N.~Guillen, I.~Kim, and A.~Mellet.
\newblock A {H}ele-{S}haw limit without monotonicity.
\newblock {\em Arch. Ration. Mech. Anal.}, 243(2):829--868, 2022.

\bibitem{HQ2024}
Q.~He.
\newblock Porous medium type reaction-diffusion equation: {L}arge time
  behaviors and regularity of free boundary.
\newblock {\em J. Funct. Anal.}, 287(12):Paper No. 110643, 2024.

\bibitem{IS2010}
K.~Ishige and P.~Salani.
\newblock Convexity breaking of the free boundary for porous medium equations.
\newblock {\em Interfaces Free Bound.}, 12(1):75--84, 2010.

\bibitem{JKT2023}
M.~Jacobs, I.~Kim, and J.~Tong.
\newblock Tumor growth with nutrients: regularity and stability.
\newblock {\em Commun. Am. Math. Soc.}, 3:166--208, 2023.

\bibitem{KB1986}
B.~Kawohl.
\newblock A remark on {N}. {K}orevaar's concavity maximum principle and on the
  asymptotic uniqueness of solutions to the plasma problem.
\newblock {\em Math. Methods Appl. Sci.}, 8(1):93--101, 1986.

\bibitem{KA1985}
A.~U. Kennington.
\newblock Power concavity and boundary value problems.
\newblock {\em Indiana Univ. Math. J.}, 34(3):687--704, 1985.

\bibitem{KL2023}
I.~Kim and J.~Lelmi.
\newblock Tumor growth with nutrients: stability of the tumor patches.
\newblock {\em SIAM J. Math. Anal.}, 55(5):5862--5892, 2023.

\bibitem{KM2023}
I.~Kim and A.~Mellet.
\newblock Incompressible limit of a porous media equation with bistable and
  monostable reaction term.
\newblock {\em SIAM J. Math. Anal.}, 55(5):5318--5344, 2023.

\bibitem{KT2021}
I.~Kim and J.~Tong.
\newblock Interface dynamics in a two-phase tumor growth model.
\newblock {\em Interfaces Free Bound.}, 23(2):191--304, 2021.

\bibitem{KT2018}
I.~Kim and O.~Turanova.
\newblock Uniform convergence for the incompressible limit of a tumor growth
  model.
\newblock {\em Ann. Inst. H. Poincar\'{e} C Anal. Non Lin\'{e}aire},
  35(5):1321--1354, 2018.

\bibitem{KN1983}
N.~Korevaar.
\newblock Capillary surface convexity above convex domains.
\newblock {\em Indiana Univ. Math. J.}, 32(1):73--81, 1983.

\bibitem{LV2003}
K.-A. Lee and J.~L. V\'azquez.
\newblock Geometrical properties of solutions of the porous medium equation for
  large times.
\newblock {\em Indiana Univ. Math. J.}, 52(4):991--1016, 2003.

\bibitem{LFJC2010}
J.~S. Lowengrub, H.~B. Frieboes, F.~Jin, Y.-L. Chuang, X.~Li, P.~Macklin, S.~M.
  Wise, and V.~Cristini.
\newblock Nonlinear modelling of cancer: bridging the gap between cells and
  tumours.
\newblock {\em Nonlinearity}, 23(1):R1--R91, 2010.

\bibitem{ML1971}
L.~G. Makar-Limanov.
\newblock The solution of the {D}irichlet problem for the equation {$\Delta
  u=-1$}\ in a convex region.
\newblock {\em Mat. Zametki}, 9:89--92, 1971.

\bibitem{MELLET2017}
A.~Mellet, B.~Perthame, and F.~Quir\'{o}s.
\newblock A {H}ele-{S}haw problem for tumor growth.
\newblock {\em J. Funct. Anal.}, 273(10):3061--3093, 2017.

\bibitem{PERTHAME2014}
B.~Perthame, F.~Quir\'{o}s, and J.~L. V\'{a}zquez.
\newblock The {H}ele-{S}haw asymptotics for mechanical models of tumor growth.
\newblock {\em Arch. Ration. Mech. Anal.}, 212(1):93--127, 2014.

\bibitem{ST2024}
X.~S. Shen and P.~Talla.
\newblock Non-preservation of $\alpha$-concavity for the porous medium equation
  in higher dimensions, 2024.

\bibitem{JY2024}
J.~Tong and Y.~P. Zhang.
\newblock Convergence of free boundaries in the incompressible limit of tumor
  growth models, 2024.

\bibitem{WTP1997}
T.~P. Witelski.
\newblock Segregation and mixing in degenerate diffusion in population
  dynamics.
\newblock {\em J. Math. Biol.}, 35(6):695--712, 1997.

\end{thebibliography}
%\bibliographystyle{plain}

\vspace{4mm}

\noindent Q. He,\par\nopagebreak
\noindent\textsc{Laboratory of Computational and Quantitative Biology (LCQB), UMR 7238 CNRS, Sorbonne Universit\'e, 75205 Paris Cedex 06, France.}\newline
Email address: {\tt qingyou.he@sorbonne-universite.fr}

\end{document}